\newtheorem{theorem}{Theorem}[section]
\newtheorem{proposition}[theorem]{Proposition}
\newtheorem{lemma}[theorem]{Lemma}
\newtheorem{remark}[theorem]{Remark}
\numberwithin{equation}{section}
\newcommand{\Z}{\mathbb{Z}}
\newcommand{\R}{\mathbb{R}}
\newcommand{\C}{\mathbb{C}}
\newcommand{\Lbb}{\mathbb{L}}
\newcommand{\Sbb}{\mathbb{S}}
\newcommand{\Tbb}{\mathbb{T}}
\newcommand{\Bcal}{\mathcal{B}}
\newcommand{\Ccal}{\mathcal{C}}
\newcommand{\Wcal}{\mathcal{W}}
\newcommand{\norm}[2]{\left\| #1 \right\|_{#2}}
\newcommand{\dd}{\;{\rm d}}
\newcommand{\de}{{\rm d}}
\newcommand{\acts}{\curvearrowright}
\DeclareMathOperator{\Vol}{Vol}
\DeclareMathOperator{\Ker}{Ker}
\DeclareMathOperator{\Sp}{Sp}
\DeclareMathOperator{\ess}{ess}
\title{Sinai billiard maps with Ruelle resonances}
\author{Damien \textsc{Thomine}}
\date{}
\begin{document}

\maketitle

\begin{abstract}

We construct families of two-dimensional Sinai billiards whose transfer operators 
have Ruelle resonances arbitrarily close to $1$. Our method involves taking a large 
enough cover of an initial billiard table, and relating the transfer operator of 
the covering table to twisted transfer operators of the initial table. We also 
study the distribution of these resonances which are close to $1$.
\end{abstract}

Convex billiards tables are one of the classical models of chaotic dynamics, 
dating back to Sinai~\cite{Sinai:1970}. Over the years, many of their statistical 
properties have been proved, starting with their ergodicity~\cite{Sinai:1970}, 
and up to the Central Limit Theorem, the exponential decay of correlations~\cite{Young:1998, Chernov:1999} 
and large deviations for the collision map~\cite{ReyBelletYoung:2008,MelbourneNicol:2008}.

\smallskip

In the last few years, the approach \textit{via} the study of the spectral properties 
of the transfer operator bore fruits, with M.~Demers and H.-K.~Zhang constructing 
Banach spaces $\Bcal$ on which the transfer operator acts quasi-compactly~\cite{DemersZhang:2012, DemersZhang:2014}. 
This implies the previous results, and led to a finer understanding of the statistical 
properties of the billiard flow using Dolgopyat-type arguments~\cite{BaladiDemersLiverani:2018}.

\smallskip

Since the transfer operator acting on a suitable Banach space is quasi-compact, 
one can define Ruelle resonances, that is, eigenvalues of the transfer operator. 
There is at least one such eigenvalue ($1$, corresponding to constant functions). 
A question, asked by V.~Baladi, was whether one could find billiard tables 
with non-trivial Ruelle resonances.

\smallskip

There are relatively few examples for which we are able to describe explicitly 
the spectrum of the transfer operator~; one such instance is given by the work 
of O.~Bandtlow, W.~Just and J.~Slipantschuk on Blaschke products~\cite{BandtlowJustSplipantschuk:2017a, BandtlowJustSplipantschuk:2017b}. 
A generic Anosov diffeomorphism of the $2$-torus also admits non-trivial 
Ruelle resonances~\cite{Adam:2017}.

\smallskip

We prove that, for a suitable choice of billiard tables, the transfer operator admits non-trivial Ruelle resonances. 
In a nutshell, we fix an initial billiard table, and relate the transfer operator on Abelian covers of the billiard table 
to twisted transfer operators for this initial table. Then the eigenvalues of a twisted transfer operator 
appear as Ruelle resonances of the transfer operator on a corresponding cover. A perturbative argument finally shows 
that, if the cover is large enough, there must exist such resonances close to $1$ (Theorem~\ref{thm:ExistenceResonances}). 

\smallskip

While we found it independently, our method is very close to the one used by D.~Jakobson, F.~Naud and L.~Soares~\cite{JakobsonNaudSoares:2019} 
to prove the existence of Ruelle resonances for geodesic flows on convex-cocompact surfaces of constant negative curvature. 
The main difference is that, for the geodesic flow in constant curvature, an approach 
\textit{via} dynamical zeta functions is available, which simplifies many arguments (in particular those 
relying on the time-reversal symmetry). This approach is not available in the context of billiards, 
so we provide more elementary, and more robust, arguments.

\smallskip

As in~\cite{JakobsonNaudSoares:2019}, we also show that those resonances which are close to $1$ are real, 
and study their distribution for families of large covers (Propositions~\ref{prop:DistributionResonancesDim1} 
and~\ref{prop:DistributionResonancesDim2}).

\smallskip

While we expect this method to work as well with the billiard flow, some groundwork 
is necessary to be able to deal with the Banach spaces constructed in~\cite{BaladiDemersLiverani:2018}. 
As a consequence, we only discuss the collision map.

\smallskip

The necessary background and our results are exposed in Section~\ref{sec:ContexteResultats}.
We prove the existence of resonances in Section~\ref{sec:ExistenceResonance}, and study their 
distribution in Section~\ref{sec:DistributionResonances}.

\section*{Acknowledgements}

The author would like to thank Viviane Baladi for suggesting the problem, and to Viviane Baladi, 
Mark Demers, S\'ebastien Gou\"ezel and Fr\'ed\'eric Naud for their remarks.

\smallskip

The Oberwolfach seminar ``Anisotropic Spaces and their Applications to Hyperbolic and Parabolic Systems'' 
held in June 2019 greatly helped this research.

\section{Context and results}
\label{sec:ContexteResultats}

\subsection{Sinai billiards and Ruelle spectrum}
\label{subsec:BillardSinai}

A planar Sinai billiard with finite horizon is given by a finite number of 
non-overlapping closed convex regions $(\Gamma_k)_{1 \leq k \leq d}$ of the torus $\Tbb^2$, 
whose boundaries $(\gamma_k)_{1 \leq k \leq d}$ are $\Ccal^3$ with non-vanishing curvature, 
and such that any line in the torus meets the interior of one of the $\Gamma_k$'s 
(the so-called finite horizon condition).

\smallskip

The billiard table is $Q = \Tbb^2 \setminus \bigcup_{k=1}^d \mathring{\Gamma_k}$. We consider 
the dynamics of a point particle moving at unit speed in $Q$, with specular reflection at the 
obstacles. The state space for this flow is three-dimensional, and there is a natural Poincar\'e section: 
the set $M := \bigcup_{k=1}^d \gamma_k \times [-\pi/2, \pi/2]$ of outward-facing unit vectors at the boundaries 
of the obstacles. The finite horizon condition implies that the return time to this Poincar\'e section 
is bounded. Let $T : M \to M$ be the first return map to $M$, which is also called the collision map.

\begin{figure}[!h]
  \centering
  \scalebox{0.5}{
  \begin{tikzpicture}
   \draw [dashed] (-5,-5) -- (5,-5) -- (5,5) -- (-5,5) -- (-5,-5) ;
   \draw [fill = black, opacity=0.5](0,0) circle (4) ;
   \draw [fill = black, opacity=0.5] (-5,-5) -- (-2.5,-5) arc (0:90:2.5) -- cycle;
   \draw [fill = black, opacity=0.5] (5,-5) -- (5,-2.5) arc (90:180:2.5) -- cycle;
   \draw [fill = black, opacity=0.5] (5,5) -- (2.5,5) arc (180:270:2.5) -- cycle;
   \draw [fill = black, opacity=0.5] (-5,5) -- (-5,2.5) arc (270:360:2.5) -- cycle;
   \draw [dotted] (1.8,-3.8) -- (2.57,-4.4) ;
   \draw (2.57,-4.4) -- (-2.52,-4.7) -- (1,-3.87) -- (3,-3.5) -- (3,-2.65) -- (4.3,-2.6) -- (5,-2) ;
   \draw (-5,-2) -- (-3.9,-0.89) ;
   \draw [dotted] (-3.9,-0.89) -- (-5,-0.5) ;
   \draw [->, line width=1mm] (2.57,-4.4) -- (1.87,-4.45) ;
   \draw [->, line width=1mm] (-2.52,-4.7) -- (-1.84,-4.54) ;
   \draw [->, line width=1mm] (1,-3.87) -- (1.69,-3.74) ;
   \draw [->, line width=1mm] (3,-3.5) -- (3,-2.8) ;
   \draw [->, line width=1mm] (3,-2.65) -- (3.7,-2.62) ;
   \draw [->, line width=1mm] (4.3,-2.6) -- (4.83,-2.14) ;
  \end{tikzpicture}
  }
  \caption{A trajectory in a finite horizon Sinai billiard table, with a marked outward-facing unit vector at each collision.}
\end{figure}
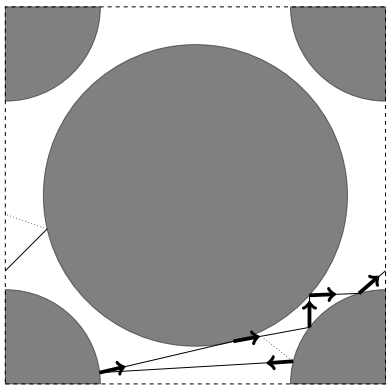

The map $T$ is hyperbolic with codimension $1$ singularities (which correspond to grazing 
trajectories on the billiard table), and a finite number of domains of continuity. 
It preserves a symplectic form on $M$, and thus the associated 
Liouville measure $\mu = \frac{\cos (\theta)}{2 \sum_{k=1}^d |\gamma_k|} \de \ell \de \theta$ on $M$. 

\smallskip

The collision map $T$ is one of the most prominent examples of 
chaotic maps ; for instance, it exhibits exponential decay of correlations against H\"older observables. 
More precisely, for any $\eta >0$, there exist constants $C>0$ and $\lambda >1$ such that
\begin{equation}
 \label{eq:DecayCorrelations}
 \left| \int_M \varphi \cdot \psi \circ T^n \dd \mu - \int_M \varphi \dd \mu \int_M \psi \dd \mu \right| 
 \leq C \lambda^{-n} \norm{\varphi}{\eta} \norm{\psi}{\eta}, 
\end{equation}
where $\norm{\cdot}{\eta}$ is the $\eta$-H\"older norm. Equation~\eqref{eq:DecayCorrelations} 
can be seen as a consequence of spectral properties of the composition operator 
$h \mapsto h \circ T$, or its dual, the transfer operator $P$. The transfer operator $P$ 
acts on $\Lbb^1 (M, \mu)$ by:
\begin{equation*}
 P (h) 
 = h \circ T^{-1} \quad \forall h \in \Lbb^1 (M, \mu).
\end{equation*}
The operator $P$ also acts on various function or distribution spaces; of interest to us will be the Banach spaces 
of anisotropic distributions $\Bcal$ constructed by M.~Demers and H.-K.~Zhang~\cite{DemersZhang:2012, DemersZhang:2014}. 

\smallskip

The anisotropic distributions on $M$ constructed by M.~Demers and H.-K.~Zhang are regular in the unstable direction, 
and dual of regular in the stable direction. By~\cite[Lemma~2.1]{DemersZhang:2012}, 
\begin{equation*}
 \Ccal^{1/3} (M) 
 \to \Bcal 
 \to \Ccal^{1/3} (T^{-n} \Wcal^s)^*,
\end{equation*}
where $\Ccal^\gamma$ is the space of $\gamma$-H\"older functions, $\Wcal^s$ is a space of stable curves, and the 
inclusions are continuous and injective. In addition, the injection $\Ccal^1 (M) \subset \Bcal$ has a dense image.

\smallskip

The operator $P$ acts continuously on $\Bcal$. By~\cite[Proposition~2.3]{DemersZhang:2012}, its action is quasi-compact: 
$1$ belongs to the spectrum of $P$ (since $\mathbf{1} \in \Bcal$ and $P (\mathbf{1}) = \mathbf{1}$), and the essential 
spectral radius $\rho_{\ess} (P \acts \Bcal)$ of $P$ acting on $\Bcal$ is strictly smaller than $1$. More precisely, 
there exists a constant $\rho_0 >0$, depending only on the minimal travel time between obstacles and the minimal curvature 
of the obstacles, such that $\rho_{\ess} (P \acts \Bcal) \leq \rho_0 <1$. 
The spectrum of $P$ in $\overline{B} (0, \rho_0)^c$ is discrete, contained in $\overline{B} (0, 1)$, 
and consists in (at most) countably many eigenvalues of finite multiplicity. Such eigenvalues are 
\textit{Ruelle resonances} of the transfer operator. 

\smallskip

In addition, when acting on $\Bcal$, the operator $P$ has a spectral gap: its resonance $1$ is simple 
and is the only resonance of modulus $1$. Hence, $P$ is the sum of the rank $1$ 
projection $h \mapsto \int_M h \dd \mu \cdot \mathbf{1}$ and of an operator of spectral radius 
strictly smaller than $1$. The exponential decay of correlations~\eqref{eq:DecayCorrelations} follows.

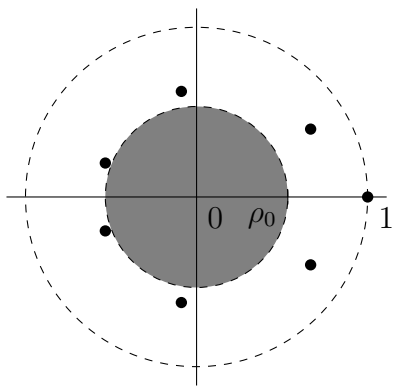
\begin{figure}[!h]
  \centering
  \scalebox{1}{
  \begin{tikzpicture}
   \draw (-2.5,0) -- (2.5,0) ;
   \draw (0,-2.5) -- (0,2.5) ;
   \foreach \P in {(2.25,0),(1.5,0.9),(1.5,-0.9),(-0.2,1.4),(-0.2,-1.4),(-1.2,0.45),(-1.2,-0.45)}
      \node at \P [circle,fill,inner sep=1.5pt]{} ;
   \draw (1.2,-0.1) -- (1.2, 0.1) ;
   \draw node[below right] at (2.25,0) {$1$};
   \draw node[below left] at (1.2,0) {$\rho_0$};
   \draw node[below right] at (0,0) {$0$};
   \draw [dashed] (0,0) circle (2.25) ;
   \draw [dashed] (0,0) circle (1.2) ;
   \fill [black, opacity=0.5] (0,0) circle (1.2) ;
  \end{tikzpicture}
  }
  \caption{Spectrum of the operator $P$ acting on $\Bcal$. The set of Ruelle resonances is represented by the black dots, 
  and may be as small as $\{1\}$. The essential spectrum is inside the gray disk. The spectrum is 
  symmetric with respect to the real line because $P$ is real.}
\end{figure}

If the obstacles are close or the minimal curvature of the obstacles 
is small, then the estimate on the essential spectral radius becomes worse, which makes it harder to 
find resonances. The strategy we adopt let us work with those constants being fixed, avoiding this difficulty.

\subsection{Coverings of billiard tables}

A Sinai billiard table admits a $\Z^2$ covering, that is, a $\Z^2$-periodic billiard table $\widetilde{Q} \subset \R^2$ 
such that the natural projection of $\widetilde{Q}$ on $\Tbb^2$ is $Q$. In what follows, 
we denote with a tilde all objects related to this $\Z^2$-cover.

\smallskip

By choosing an origin and making $\Z^2$ act on $\widetilde{Q}$, the obstacles of $\widetilde{Q}$ 
can be indexed by $\Z^2$: we get $\R^2 \setminus \widetilde{Q} = \bigcup_{k=1}^d \bigcup_{p \in \Z^2} \mathring{\Gamma_{k,p}}$, 
where $\Gamma_{k,p} = \Gamma_{k,0}+p$, whence $\widetilde{M} = \bigcup_{p \in \Z^2} M \times \Z^2$. 

\smallskip

The Liouville measure $\mu$ also lifts to a $\widetilde{T}$-invariant measure $\widetilde{\mu}$, 
such that any restriction of $\widetilde{\mu}$ to a fundamental domain equals $\mu$.

\smallskip

The collision map $\widetilde{T}$ for $\widetilde{Q}$ is a $\Z^2$ extension of the collision 
map for $Q$. Given $(x,p) \in \widetilde{M} = M \times \Z^2$, we have $\widetilde{T} (x,p) =: (T(x),q)$, 
and $q-p$ depend only on $x$. Writing $q-p =: F(x)$, we get a function $F : M \to \Z^2$ such that:
\begin{equation*}
 \widetilde{T} (x,p) 
 = (T (x), p+F(x))
\end{equation*}
The value $F (x)$ stays the same as long as $(x,0)$ and $\widetilde{T} (x,0)$ 
belong to the same two obstacles. As a consequence, $F$ is constant on the domains 
of continuity of $T$. By the finite horizon condition, $T$ admits finitely many domains of continuity, 
so that $F$ is bounded. 

\smallskip

The billiard map is time-reversible. The involution $\iota (\ell, \theta) := (\ell, -\theta)$ 
on $M$ has the following properties:
\begin{align*}
 \iota \circ T 
 & = T^{-1} \circ \iota, \\
 F \circ \iota & = - F \circ T^{-1},
\end{align*}
and $\iota$ preserves $\mu$.  It follows that $\int_M F \dd \mu = 0$.

%\smallskip
%
%The transfer operator $\widetilde{P}$ for $(\widetilde{M}, \widetilde{\mu}, \widetilde{T})$ acts on 
%$\Lbb^p (\widetilde{M}, \widetilde{\mu})$ for any $p \in [1, \infty]$. We can also construct 
%a space of anisotropic distributions on $\widetilde{M}$: a distribution $h$ belong to 
%$\widetilde{\Bcal}$ if and only if $h \mathbf{1}_{M \times \{p\}} \in \Bcal$, 
%and:
%%
%\begin{equation*}
% \sup_{p \in \Z^2} \norm{h \mathbf{1}_{M \times \{p\}}}{\Bcal} 
% =: \norm{h}{\widetilde{\Bcal}} 
% < +\infty.
%\end{equation*}
%%
%Then $\widetilde{P}$ acts on $\widetilde{\Bcal}$ (see e.g.~\cite[Lemma~5.3]{DemersZhang:2014}).
%
%
%\subsubsection{General coverings of billiard tables}

\smallskip

The same construction can be used 
on any covering of $Q$. In particular, given any rank $2$ lattice $\Lambda \subset \Z^2$, we get 
a billiard table $Q_\Lambda$, to which we associate a probability-preserving dynamical 
system $(M_\Lambda, \mu_\Lambda, T_\Lambda)$. Writing $G := \Z^2 / \Lambda$, we have:
\begin{align*}
 M_\Lambda & = M \times G, \\
 \mu_\Lambda & = \frac{1}{|G|} \sum_{g \in G} \mu \times \delta_g, \\
 T_\Lambda (x,g) & = (T(x), g+F(x) [\Lambda]).
\end{align*}

\begin{figure}[!h]
  \centering
  \scalebox{0.7}{
  \begin{tikzpicture}
   \draw [dashed, shift={(5,0)}] (-1,-1) -- (1,-1) -- (1,1) -- (-1,1) -- (-1,-1) ; % Petite table
   \draw [fill = black, opacity=0.5, shift={(5,0)}](0,0) circle (0.8) ; % Petite table, disque
   \draw [fill = black, opacity=0.5, shift={(5,0)}] (-1,-1) -- (-0.5,-1) arc (0:90:0.5) -- cycle; % Petite table, coins
   \draw [fill = black, opacity=0.5, shift={(5,0)}] (1,-1) -- (1,-0.5) arc (90:180:0.5) -- cycle;
   \draw [fill = black, opacity=0.5, shift={(5,0)}] (1,1) -- (0.5,1) arc (180:270:0.5) -- cycle;
   \draw [fill = black, opacity=0.5, shift={(5,0)}] (-1,1) -- (-1,0.5) arc (270:360:0.5) -- cycle;
   \draw [dashed] (-2,-3) -- (2,-3) -- (2,3) -- (-2,3) -- (-2,-3) ; % Table moyenne
   \foreach \P in {(-1,-2),(1,-2),(-1,0),(1,0),(-1,2),(1,2)} % Table moyenne, disques
      \draw [fill = black, opacity=0.5] \P circle (0.8) ;
   \foreach \P in {(0,-1),(0,1)}
      \draw [fill = black, opacity=0.5] \P circle (0.5) ;
   \draw [fill = black, opacity=0.5] (-2,-3) -- (-1.5,-3) arc (0:90:0.5) -- cycle; % Table moyenne, coins
   \draw [fill = black, opacity=0.5] (2,-3) -- (2,-2.5) arc (90:180:0.5) -- cycle;
   \draw [fill = black, opacity=0.5] (2,3) -- (1.5,3) arc (180:270:0.5) -- cycle;
   \draw [fill = black, opacity=0.5] (-2,3) -- (-2,2.5) arc (270:360:0.5) -- cycle;
   \draw [fill = black, opacity=0.5] (0,-3) -- (0.5,-3) arc (0:180:0.5) -- cycle; % Table moyenne, demi-disques
   \draw [fill = black, opacity=0.5] (0,3) -- (-0.5,3) arc (180:360:0.5) -- cycle;
   \draw [fill = black, opacity=0.5] (-2,-1) -- (-2,-1.5) arc (-90:90:0.5) -- cycle;
   \draw [fill = black, opacity=0.5] (-2,1) -- (-2,0.5) arc (-90:90:0.5) -- cycle;
   \draw [fill = black, opacity=0.5] (2,-1) -- (2,-0.5) arc (90:270:0.5) -- cycle;
   \draw [fill = black, opacity=0.5] (2,1) -- (2,1.5) arc (90:270:0.5) -- cycle;
   \foreach \n in {-2,0,2} % Grande table, disques
      \foreach \m in {-3,-1,1,3}
         \draw [fill = black, opacity=0.5, shift={(-8,0)}] (\n,\m) circle (0.8) ;
   \foreach \n in {-3,-1,1,3} % Grande table, disques
      \foreach \m in {-4,-2,0,2,4}
         \draw [fill = black, opacity=0.5, shift={(-8,0)}] (\n,\m) circle (0.5) ;
   \draw [->, line width=1mm] (-3.5,0) -- (-2.5,0) ;
   \draw [->, line width=1mm] (2.5,0) -- (3.5,0) ;
  \end{tikzpicture}
  }
  \caption{The $\Z^2$-covering $\widetilde{Q}$ of $Q$, and an intermediate covering $Q_\Lambda$ with $\Lambda=2\Z \oplus 3\Z$.}
\end{figure}

We denote by $P_\Lambda$ the transfer operator associated with $(M_\Lambda, \mu_\Lambda, T_\Lambda)$, 
and $\Bcal_\Lambda$ the Banach space as constructed in~\cite{DemersZhang:2012} for the system $(M_\Lambda, \mu_\Lambda, T_\Lambda)$. 
By construction, for any $h \in \Bcal_\Lambda$,
\begin{equation*}
 \norm{h}{\Bcal_\Lambda} 
 = \max_{g \in G} \norm{h \mathbf{1}_{M \times \{g\}}}{\Bcal}.
\end{equation*}

\subsection{Results}

We shall prove that, when $\Lambda$ is large enough, the  Sinai billiard on 
the table $Q_\Lambda$ has non-trivial Ruelle resonances. More precisely,

\begin{theorem}
\label{thm:ExistenceResonances}
  
 There exists $\delta >0$ such that 
 $\Sp (P_\Lambda \acts \Bcal_\Lambda) \subset \overline{B}(0,1-\delta) \cup [1-\delta, 1]$ 
 for all lattices $\Lambda$.
 
 \smallskip
 
 In addition, there exist positive constants $c<C$ such that:
 \begin{equation*}
  c |G| 
  < |\{\text{Ruelle resonances in } [1-\delta, 1], \text{ with multiplicities}\}|
  < C|G|
 \end{equation*}
 In particular,  whenever $\Lambda$ is sparse enough, $P_\Lambda$ admits non-trivial 
 Ruelle resonances. 
\end{theorem}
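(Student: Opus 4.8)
The plan is to relate the transfer operator $P_\Lambda$ on the cover to a family of \emph{twisted} transfer operators on the base $M$, indexed by the characters of $G = \Z^2/\Lambda$. Concretely, for a character $\chi \in \widehat{G}$, define the twisted operator $P_\chi$ on $\Bcal$ by $P_\chi(h) = \chi(F \circ T^{-1}) \cdot (h \circ T^{-1})$ (equivalently, twist $P$ by the cocycle $F$). Since $T_\Lambda$ is a $G$-extension of $T$, the decomposition of $\Lbb^2(M_\Lambda)$ — and of $\Bcal_\Lambda$ — into $\chi$-isotypic components yields a conjugacy $P_\Lambda \cong \bigoplus_{\chi \in \widehat{G}} P_\chi$; in particular $\Sp(P_\Lambda \acts \Bcal_\Lambda) = \bigcup_{\chi \in \widehat{G}} \Sp(P_\chi \acts \Bcal)$, with multiplicities adding up. First I would check that this decomposition is compatible with the Demers–Zhang spaces (the norm identity $\norm{h}{\Bcal_\Lambda} = \max_g \norm{h\mathbf{1}_{M\times\{g\}}}{\Bcal}$ makes the projections onto isotypic components bounded), and that each $P_\chi$ inherits the uniform bound $\rho_{\ess}(P_\chi \acts \Bcal) \le \rho_0 < 1$ from the base — this is where fixing the geometry of $Q$ (travel time, curvature) pays off, since $\rho_0$ is the same for every $\chi$ and every $\Lambda$.

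Next I would set up the perturbative step. Identify $\widehat{G}$ with a finite subset of the torus $\Tbb^2 = \widehat{\Z^2}$ via $\chi(p) = e^{2\pi i \langle \xi, p\rangle}$ for $\xi$ in the dual lattice $\Lambda^*/\Z^2$ (or a transversal thereof), so that $P_\chi = P_\xi$ with $P_\xi(h) = e^{2\pi i \langle \xi, F\circ T^{-1}\rangle}(h\circ T^{-1})$, and $\xi \mapsto P_\xi$ is a real-analytic (indeed entire, since $F$ is bounded and integer-valued) family of operators on $\Bcal$ with $P_0 = P$. Since $P_0$ has a spectral gap — $1$ is a simple isolated eigenvalue — analytic perturbation theory gives, for $\xi$ in a fixed neighbourhood $U$ of $0$, a simple eigenvalue $\lambda(\xi)$ depending analytically on $\xi$ with $\lambda(0)=1$, while the rest of $\Sp(P_\xi)$ stays in a disk of radius $<1$. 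The time-reversal symmetry $\iota \circ T = T^{-1}\circ \iota$, $F\circ\iota = -F\circ T^{-1}$ forces $P_{-\xi}$ to be conjugate to the adjoint (or to $P_\xi$ composed with $\iota$), which gives $\lambda(-\xi) = \overline{\lambda(\xi)}$ and hence $\lambda$ is real on the real points where it is real to begin with; combined with the standard computation $\nabla\lambda(0) = i\int_M F\,d\mu = 0$ and a Hessian estimate (the asymptotic variance of $F$, which is positive and finite by the CLT for the billiard, being non-degenerate because $F$ is not a coboundary — one must rule this out, e.g. by finite-horizon geometry), one gets $\lambda(\xi) = 1 - \tfrac12 \langle \Sigma \xi, \xi\rangle + O(|\xi|^3)$ with $\Sigma > 0$. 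So on a fixed ball $B(0,r)\subset U$ we have $1 - \delta \le \lambda(\xi) \le 1$ for a uniform $\delta > 0$, and $\lambda(\xi)$ is real there after possibly shrinking $r$. Shrinking $U$ once more to push the rest of the spectrum into $\overline{B}(0,1-\delta)$ (uniformly in $\xi$, since the spectral projection varies continuously and $\rho_{\ess}$ is uniformly bounded), we conclude that for \emph{every} $\xi$, $\Sp(P_\xi \acts \Bcal) \subset \overline{B}(0,1-\delta) \cup [1-\delta,1]$: for $\xi \in U$ by the above, and for $\xi \notin U$ because then the whole spectrum — the perturbative branch having "merged back in" — lies in $\overline{B}(0,1-\delta)$ (this last claim needs a compactness argument over the compact set $\Tbb^2 \setminus U$, using upper semicontinuity of the spectrum and the fact that $1$ is an eigenvalue of $P_\xi$ only at $\xi = 0$, which follows from the spectral gap of $P_0$ and mixing of the cover: $1 \in \Sp(P_\xi)$ would force a $\chi$-equivariant invariant distribution, impossible for $\xi \neq 0$). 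Taking the union over $\xi \in \widehat{G}$ proves the first assertion of the theorem.

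For the counting statement, the number of Ruelle resonances of $P_\Lambda$ in $[1-\delta,1]$ with multiplicity equals $\sum_{\xi \in \widehat{G}} \#\{\Sp(P_\xi)\cap[1-\delta,1]\}$. Each $\xi \in \widehat{G}\cap U$ contributes at least the one resonance $\lambda(\xi) \in [1-\delta,1]$, giving the lower bound: since $\widehat{G}$ is $|G|$ equidistributed points in $\Tbb^2$, the number landing in the fixed ball $B(0,r)$ is $\ge c|G|$ for large $|G|$ (here "$\Lambda$ sparse" $\iff$ $G$ large $\iff$ $\widehat{G}$ fine, so the count in any fixed open set grows like $|G|\cdot\mathrm{vol}$). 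For the upper bound, one needs a \emph{uniform} bound $N$ on $\#\{\Sp(P_\xi)\cap[1-\delta,1]\}$ valid for all $\xi$: for $\xi \in U$ this is $1$ by construction, and for $\xi \notin U$ it is $0$; so in fact $\sum_{\xi\in\widehat{G}}\#\{\cdots\} \le \#(\widehat{G}\cap U) \le C|G|$. The main obstacle, I expect, is the uniformity in $\Lambda$ throughout — verifying that the Demers–Zhang construction, the essential-spectral-radius bound, and the perturbation-theoretic neighbourhoods $U$, $r$, $\delta$ can all be chosen independently of $\Lambda$ (they depend only on the fixed base geometry, but making the reduction $P_\Lambda \cong \bigoplus_\chi P_\chi$ genuinely isometric on the anisotropic spaces requires care), together with the non-degeneracy input that $F$ is not cohomologous to a function valued in a proper sublattice of $\Z^2$ — needed so that $\Sigma > 0$ is a genuine $2\times 2$ positive-definite form and the branch $\lambda(\xi)$ does not accidentally stay pinned at $1$ along a subspace.
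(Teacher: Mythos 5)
Your architecture matches the paper's exactly: isotypic decomposition $\Bcal_\Lambda = \bigoplus_{\chi \in \widehat{G}} \Bcal_{\Lambda,\chi}$, conjugation to twisted operators $P_w$ on $\Bcal$, analytic perturbation theory giving $\lambda_w = 1 - \tfrac12\Sigma(w,w) + O(|w|^3)$ with $\Sigma > 0$, a volume/equidistribution count of $\widehat{G}$ near $0$ for the lower bound, and time-reversal for realness. But two of the technical inputs you invoke are not actually supplied by the mechanisms you name, and both are exactly where the paper spends its effort.

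\smallskip

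First, to get the upper bound $C|G|$ and the inclusion $\Sp(P_\Lambda) \subset \overline{B}(0,1-\delta) \cup [1-\delta,1]$, you need that for each $w \ne 0$ the operator $P_w$ has \emph{no} eigenvalue of modulus $1$ (aperiodicity), not merely that $1 \notin \Sp(P_w)$. Your justification — ``a $\chi$-equivariant invariant distribution, impossible for $\xi \neq 0$, which follows from the spectral gap of $P_0$ and mixing of the cover'' — does not work: the spectral gap of $P_0$ controls only a neighbourhood of $0$ in $\widehat{\Z^2}$, and the $\Z^2$-cover is an infinite-measure system, for which ``mixing'' is not the standard notion and does not directly rule out peripheral spectrum of $P_w$. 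The paper's route is different and is the key missing lemma: one shows the set $H = \{(\rho,w) \in \Sbb_1 \times 2\pi\Tbb^2 : \rho \in \Sp(P_w)\}$ is a \emph{closed subgroup}, that $(1,0)$ is isolated in $H$ (by the Taylor expansion of $\lambda_w$), hence $H$ is finite; a nontrivial element of $H$ then has finite order, so it lives in $\widehat{G}$ for some rank-$2$ lattice $\Lambda$, producing nontrivial peripheral spectrum for $P_\Lambda$ on the \emph{compact} billiard $Q_\Lambda$ — contradicting ergodicity and mixing of finite-horizon Sinai billiards. Without this reduction to finite covers, the compactness argument over $\Tbb^2 \setminus U$ has nothing to bootstrap from.

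\smallskip

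Second, the realness step: you assert that time-reversal makes $P_{-\xi}$ ``conjugate to the adjoint'' and read off $\lambda(-\xi) = \overline{\lambda(\xi)}$, then declare $\lambda$ real. The logic should be split: realness of $P$ gives $\lambda_{-w} = \overline{\lambda_w}$, and time-reversal should give $\lambda_{-w} = \lambda_w$, whence $\lambda_w \in \R$. But the time-reversal step is exactly where a genuine obstruction sits: the operator $\widetilde{P}_w := \iota_* P_w \iota^*$ acts on $\widetilde{\Bcal} = \iota^*\Bcal$ and agrees with $P_{-w}^*$ on $\Ccal^1$ test functions, yet one cannot conclude $\Sp(\widetilde{P}_w) = \Sp(P_{-w}^*)$ by a density argument, because it is \emph{not known} that $\Ccal^1(M,\C)$ is dense in $\Bcal^*$ (so the abstract ``two operators agreeing on a dense core have the same spectrum'' lemma, e.g.\ Baladi--Tsujii, does not apply). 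The paper sidesteps this with an ad hoc argument: pick $\psi \in \Ccal^1$ with $\Pi_{-w}\psi \ne 0$, then (using the Demers--Zhang construction of test curves, mollified to be $\Ccal^1$) pick $\varphi \in \Ccal^1$ with $\Pi_{-w}\psi(\varphi) \ne 0$, and compare the exponential rates of $\int_M \varphi \cdot P_{-w}^n\psi \, \de\mu = \int_M \widetilde{P}_w^n\varphi \cdot \psi \,\de\mu$ as $n \to \infty$ to force $\lambda_w = \lambda_{-w}$. Your proposal would need a substitute for this argument; as written, the ``conjugate to the adjoint'' step is a gap.
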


\begin{figure}[!h]
  \centering
  \scalebox{1}{
  \begin{tikzpicture}
   \draw (-2.5,0) -- (2.5,0) ;
   \draw (0,-2.5) -- (0,2.5) ;
   \foreach \P in {(2.25,0),(2.15,0),(2.05,0),(1.95,0)}
      \node at \P [circle,fill,inner sep=1.5pt]{} ;
   \foreach \P in {(1.5,0.9),(1.4,0.7),(1.2,0.7), (1,0.9),(1.3,1.05),(1.15,1.05)}
      \node at \P [circle,fill,inner sep=1.5pt]{} ;
   \foreach \P in {(1.5,-0.9),(1.4,-0.7),(1.2,-0.7), (1,-0.9),(1.3,-1.05),(1.15,-1.05)}
      \node at \P [circle,fill,inner sep=1.5pt]{} ;
   \foreach \P in {(-0.2,1.4),(0,1.3),(-0.4,1.2)}
      \node at \P [circle,fill,inner sep=1.5pt]{} ;
   \foreach \P in {(-0.2,-1.4),(0,-1.3),(-0.4,-1.2)}
      \node at \P [circle,fill,inner sep=1.5pt]{} ;
   \foreach \P in {(-1.2,0.45),(-1.2,-0.45)}
      \node at \P [circle,fill,inner sep=1.5pt]{} ;
   \draw (1.2,-0.1) -- (1.2, 0.1) ;
   \draw node[below right] at (2.25,0) {$1$};
   \draw node[below right] at (1.2,0) {$1-\delta$};
   \draw node[below left] at (1.2,0) {$\rho_0$};
   \draw node[below right] at (0,0) {$0$};
   \draw [dashed] (0,0) circle (2.25) ;
   \draw [dashed] (0,0) circle (1.8) ;
   \draw [dashed] (0,0) circle (1.2) ;
   \fill [black, opacity=0.5] (0,0) circle (1.2) ;
  \end{tikzpicture}
  }
  \caption{Spectrum of the operator $P_\Lambda$ acting on $\Bcal_\Lambda$ for a sparse enough lattice $\Lambda$. 
  The spectrum is still symmetric with respect to the real line. The resonances on the segment $[1- \delta,1]$ are guaranteed to exist; 
  the others may or may not exist.}
\end{figure}
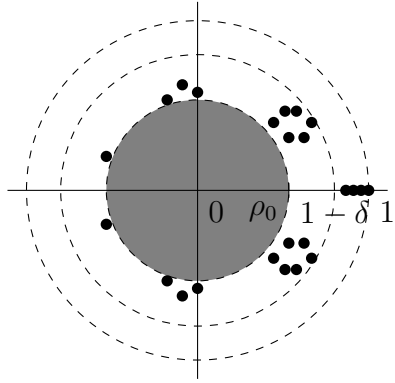

 %  \node at (1.5,0.9) [circle,fill,inner sep=1.5pt]{} ;
 %  \node at (1.5,-0.9) [circle,fill,inner sep=1.5pt]{} ;
 %  \node at (2.25,0) [circle,fill,inner sep=1.5pt]{} ;
 %  \node at (-0.2,1.4) [circle,fill,inner sep=1.5pt]{} ;
 %  \node at (-0.2,-1.4) [circle,fill,inner sep=1.5pt]{} ;
 %  \node at (-1.2,0.45) [circle,fill,inner sep=1.5pt]{} ;
 %  \node at (-1.2,-0.45) [circle,fill,inner sep=1.5pt]{} ;
 %  \node at (2.25,0) [circle,fill,inner sep=1.5pt]{} ;

Now, let us focus on the distribution of these resonances. 
Let $\rho_0$ be the upper bound on $\rho_{\ess} (P \acts \Bcal)$ given by~\cite[Proposition~2.3]{DemersZhang:2012}. 
For $\Lambda < \Z^2$ a rank $2$ lattice, define the \textit{spectral measure} of $P_\Lambda$ as:
\begin{equation*}
 \nu_\Lambda 
 := \frac{1}{|G|} \sum_{\substack{\lambda \text{ resonance of } P_\Lambda \\ |\lambda| > 1-\rho_0}} \delta_\lambda,
\end{equation*}
where the sum is taken with multiplicity. Then $\rho_{\ess} (P_\Lambda \acts \Bcal_\Lambda) \leq \rho_0$, 
so that $\nu_\Lambda$ is a Radon measure on $\overline{B}(0,\rho_0)^c$. Our next proposition, 
which is a variant of~\cite[Theorem~1.3]{JakobsonNaudSoares:2019}, states that, for 
any sequence $(\Lambda_N)$ of lattices, the sequence $(\nu_{\Lambda_N})$ of spectral measures 
admits a converging subsequence.

\begin{proposition}
\label{prop:CompactenessSpectralMeasures}
 
 For any sequence of rank $2$ lattices $\Lambda_N < \Z^2$, there exists 
 a subsequence $(\Lambda_{N_k})_{k \geq 0}$ and a Radon measure $\nu$ such that 
 $\nu_{\Lambda_{N_k}} \to \nu$ for the vague topology, 
 i.e.\
 \begin{equation*}
  \lim_{k \to + \infty} \int_{\overline{B}(0,\rho_0)^c} f \dd \nu_{\Lambda_{N_k}} 
  = \int_{\overline{B}(0,\rho_0)^c} f \dd \nu
 \end{equation*}
 for all $f \in \Ccal_c (\overline{B}(0,\rho_0)^c, \C)$.
\end{proposition}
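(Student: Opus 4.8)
The plan is to deduce vague precompactness from a \emph{uniform} bound on the number of Ruelle resonances of $P_\Lambda$ lying outside a fixed disk of radius $r>\rho_0$, of the form $O(|G|)$ with a constant depending only on $r$ and the (fixed) base table. First I would invoke the standard criterion: a sequence of Radon measures on the locally compact, $\sigma$-compact metric space $\overline B(0,\rho_0)^c$ is precompact for the vague topology if and only if it is uniformly bounded on every compact subset (Banach--Alaoglu plus a diagonal argument over an exhaustion by compacts). So it is enough to bound $\nu_{\Lambda_N}(K)$ uniformly in $N$ for each compact $K\subset\overline B(0,\rho_0)^c$. Since every resonance of $P_{\Lambda_N}$ lies in $\overline B(0,1)$ and $K$ is at positive distance from $\overline B(0,\rho_0)$, there is $r=r(K)\in(\rho_0,1]$ with $K\subseteq\{z:|z|\ge r\}$; writing $N_\Lambda(r)$ for the number, with multiplicity, of resonances of $P_\Lambda\acts\Bcal_\Lambda$ of modulus $\ge r$ --- finite, since $\rho_{\ess}(P_\Lambda\acts\Bcal_\Lambda)\le\rho_0<r$ --- one has $\nu_{\Lambda_N}(K)\le|G_N|^{-1}N_{\Lambda_N}(r)$. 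Thus everything reduces to showing $\sup_\Lambda|G|^{-1}N_\Lambda(r)<\infty$ for each fixed $r>\rho_0$.

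For this I would use the decomposition underlying Theorem~\ref{thm:ExistenceResonances}. The group $G=\Z^2/\Lambda$ acts on $M_\Lambda=M\times G$ by translation in the second coordinate, $T_\Lambda$ commutes with this action, and $\norm{h}{\Bcal_\Lambda}=\max_{g\in G}\norm{h\,\mathbf 1_{M\times\{g\}}}{\Bcal}$ makes the action isometric on $\Bcal_\Lambda$. Decomposing $\Bcal_\Lambda$ into $G$-isotypic components yields a $P_\Lambda$-invariant splitting $\Bcal_\Lambda=\bigoplus_{\chi\in\widehat G}\Bcal_\Lambda^\chi$ with each summand isomorphic to $\Bcal$, on which $P_\Lambda$ acts as $P_{\theta(\chi)}$, where $\bigl(P_\theta\bigr)_{\theta\in\widehat{\Z^2}}$ is the fixed family $P_\theta h:=P\bigl(e^{2\pi\ii\,\theta\cdot F}h\bigr)$ (so $P_0=P$) and $\theta(\chi)\in\widehat{\Z^2}$ is the point attached to $\chi$. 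As $\chi$ runs over $\widehat G$ the points $\theta(\chi)$ run over a subgroup of $\widehat{\Z^2}$ of cardinality $|G|$, so $\Sp(P_\Lambda)=\bigcup_\chi\Sp(P_{\theta(\chi)})$ with multiplicities adding, and
\begin{equation*}
 \frac{1}{|G|}\,N_\Lambda(r)=\frac{1}{|G|}\sum_{\chi\in\widehat G}N_{\theta(\chi)}(r)\ \le\ \sup_{\theta\in\widehat{\Z^2}}N_\theta(r),
\end{equation*}
where $N_\theta(r)$ counts the eigenvalues of $P_\theta\acts\Bcal$ of modulus $\ge r$ with multiplicity. It remains to bound $N_\theta(r)$ uniformly in $\theta\in\widehat{\Z^2}$.

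This I would obtain from compactness of $\widehat{\Z^2}$ and standard perturbation theory for spectral projections, using two inputs: (i) $\theta\mapsto P_\theta$ is Lipschitz continuous from $\widehat{\Z^2}$ into the bounded operators on $\Bcal$, because $e^{2\pi\ii\,\theta\cdot F}=\sum_j e^{2\pi\ii\,\theta\cdot v_j}\,\mathbf 1_{O_j}$ over the finitely many level sets $O_j$ of $F$ and multiplication by each $\mathbf 1_{O_j}$ is bounded on $\Bcal$ (as $F$ is constant on the domains of continuity of $T$); and (ii) $\rho_{\ess}(P_\theta\acts\Bcal)\le\rho_0$ uniformly in $\theta$, since the Lasota--Yorke estimates of~\cite{DemersZhang:2012} are insensitive to the unimodular, locally constant weight $e^{2\pi\ii\,\theta\cdot F}$. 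Now fix $\theta_0$ and pick $r'\in(\rho_0,r)$ that is not the modulus of any eigenvalue of $P_{\theta_0}$ (possible since those moduli form a countable set). The positively oriented boundary $\Gamma$ of $\{z:r'\le|z|\le 2\}$ misses $\Sp(P_{\theta_0})$, hence by (i) misses $\Sp(P_\theta)$ for $\theta$ in a neighbourhood $U_{\theta_0}$, where $\theta\mapsto\tfrac{1}{2\pi\ii}\oint_\Gamma(z-P_\theta)^{-1}\,\de z$ is norm-continuous; a continuous family of idempotents has locally constant rank, so $N_\theta(r)\le N_\theta(r')=N_{\theta_0}(r')$ for $\theta\in U_{\theta_0}$. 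Covering the compact group $\widehat{\Z^2}$ by finitely many such neighbourhoods bounds $\sup_\theta N_\theta(r)$, hence $\nu_{\Lambda_N}(K)\le M(r(K))$ uniformly, and a diagonal extraction delivers the convergent subsequence.

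I expect the genuine obstacle to be the uniformity in (i) and (ii): one must know that the Demers--Zhang space $\Bcal$ and its Lasota--Yorke inequalities are stable under the twist $h\mapsto e^{2\pi\ii\,\theta\cdot F}h$ and vary continuously in $\theta$, with all constants independent of $\theta$ (and of the lattice). This is exactly the groundwork required for Theorem~\ref{thm:ExistenceResonances}; granting it, the remaining ingredients --- the vague-precompactness criterion, the isotypic decomposition, and the local constancy of the resonance count --- are soft.
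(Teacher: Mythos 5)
Your proposal follows the same route as the paper: both decompose $\Sp(P_\Lambda\acts\Bcal_\Lambda)$ into the spectra of the twisted operators $P_w$ over $w\in\widehat G$, write $\nu_\Lambda=|G|^{-1}\sum_{w\in\widehat G}\nu_w$, obtain a uniform bound on $\nu_w(K)$ for each compact $K\subset\overline B(0,\rho_0)^c$ from the continuity of $w\mapsto P_w$ and the uniform essential-spectral-radius bound, and then conclude by compactness of bounded measures on compacts plus a diagonal extraction. The only difference is that the paper compresses the uniform bound to the phrase ``by continuity of the spectrum,'' whereas you supply the standard justification via Riesz projections over contours avoiding the moduli of eigenvalues at a base point together with a finite cover of the compact group $\widehat{\Z^2}$; that is precisely what the paper's phrase is shorthand for.
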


For specific choices of a sequence of lattices $(\Lambda_N)$, we can express explicitly the 
limit of $(\nu_{\Lambda_N})$ near $1$, similarly to what was done in~\cite[Section~3.2]{JakobsonNaudSoares:2019}. 
Let $\Lambda_N^{(1)} := N \Z \times \Z$ and $\Lambda_N^{(2)} := (N \Z)^2$. 
We write $\nu_N^{(1)} := \nu_{\Lambda_N^{(1)}}$ and likewise $\nu_N^{(2)} := \nu_{\Lambda_N^{(2)}}$.
For the sequence of lattices $(\Lambda_N^{(1)})$, we get the following statement:

\begin{proposition}
\label{prop:DistributionResonancesDim1}
 
 Let $\delta$ be as in Theorem~\ref{thm:ExistenceResonances}.
 There exists $\delta_0 \in (0, \delta]$ and a finite measure 
 $\nu_{|[1-\delta_0,1]}^{(1)}$ on $[1-\delta_0, 1]$ such that:
 \begin{equation*}
  \lim_{N \to + \infty} \nu_{N|[1-\delta_0,1]}^{(1)} 
  = \nu_{|[1-\delta_0,1]}^{(1)},
 \end{equation*}
 where the convergence is in $\Ccal([1-\delta_0,1], \C)^*$. Moreover,
 \begin{equation}
 \label{eq:Distribution1}
  \frac{\de \nu_{|[1-\delta_0,1]}^{(1)}}{\de x} 
  \sim_{x \to 1^-} \frac{1}{\pi \sqrt{2 \Sigma_{11}}} \cdot \frac{1}{\sqrt{1-x}}.
 \end{equation}
\end{proposition}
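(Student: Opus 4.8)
The strategy is to relate the transfer operator $P_{\Lambda_N^{(1)}}$ on the cover $M \times (\Z/N\Z)$ to the family of \emph{twisted} transfer operators $P_\omega$ on the base $M$, where for $\omega \in \Tbb = \R/\Z$ one sets $P_\omega h = e^{-2\pi\ii \omega \cdot F(x)} \cdot h\circ T^{-1}$ (with $F$ here meaning the first coordinate of the displacement cocycle, since $\Lambda_N^{(1)} = N\Z \times \Z$ only records the first component modulo $N$). The point is Fourier analysis on $G = \Z/N\Z$: the operator $P_{\Lambda_N^{(1)}}$ is conjugate to $\bigoplus_{j=0}^{N-1} P_{j/N}$ acting on $\Bcal$. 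Hence the spectral measure $\nu_N^{(1)}$ is exactly $\frac1N \sum_{j=0}^{N-1} \mu_{j/N}$, where $\mu_\omega$ is the counting measure (with multiplicity) on the spectrum of $P_\omega$ in $\overline B(0,\rho_0)^c$. As $N \to \infty$ the points $j/N$ equidistribute in $\Tbb$, so $\nu_N^{(1)}$ should converge to $\int_{\Tbb} \mu_\omega \dd\omega$, at least on any region where the eigenvalues of $P_\omega$ vary continuously and stay isolated. Near $\omega = 0$ we have $P_0 = P$, which has a spectral gap: a simple eigenvalue at $1$ and nothing else of modulus $\geq 1-\delta$. By analytic perturbation theory (the family $\omega \mapsto P_\omega$ is real-analytic in a suitable operator sense on $\Bcal$), there is a neighbourhood $(-\omega_0,\omega_0)$ of $0$ on which $P_\omega$ has a single eigenvalue $\lambda(\omega)$ near $1$, analytic in $\omega$, and the rest of the spectrum stays inside $\overline B(0,1-\delta_0)$ for a possibly smaller $\delta_0 \in (0,\delta]$. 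On $[1-\delta_0,1]$, the only resonances of $P_\omega$ that contribute are these $\lambda(\omega)$ for $|\omega|<\omega_0$; the contributions from $\omega$ bounded away from $0$ land strictly inside the disk (this uses that $P_\omega$ has no eigenvalue of modulus $1$ for $\omega\neq 0$, which follows from the aperiodicity of $F$ — a standard consequence of the mixing of $T$ together with the finite-horizon geometry).

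The second ingredient is the asymptotic expansion of $\lambda(\omega)$ at $\omega = 0$. Using the eigenprojector perturbation formulas and $\int_M F\dd\mu = 0$ (the time-reversal identity $F\circ\iota = -F\circ T^{-1}$ noted in the excerpt, together with $\iota_*\mu=\mu$), one finds $\lambda(0)=1$, $\lambda'(0)=0$, and $\lambda''(0) = -4\pi^2 \Sigma_{11}$, where $\Sigma_{11} = \sum_{n\in\Z} \int_M F_1 \cdot (F_1\circ T^n)\dd\mu$ is the first diagonal entry of the asymptotic variance matrix of the cocycle $F$ under $\mu$ (the same $\Sigma$ that appears in the CLT for the billiard; it is positive because $F_1$ is not a coboundary). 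Time-reversal also forces $\lambda(\omega)$ to be real for real $\omega$: one checks $P_{-\omega}$ is conjugate to the adjoint-type operator obtained via $\iota$, so the eigenvalue near $1$ satisfies $\lambda(-\omega) = \overline{\lambda(\omega)}$ and, being the perturbation of a real simple eigenvalue of a real operator, $\lambda(\omega)\in\R$. Thus $\lambda(\omega) = 1 - 2\pi^2\Sigma_{11}\,\omega^2 + O(\omega^3)$, and in particular $\lambda$ is strictly decreasing in $|\omega|$ near $0$, so it is a local diffeomorphism from a half-neighbourhood of $0$ onto a half-neighbourhood of $1$ in $[1-\delta_0,1]$.

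Putting these together: for $f \in \Ccal([1-\delta_0,1],\C)$,
\begin{equation*}
 \int f \dd\nu_{N|[1-\delta_0,1]}^{(1)} = \frac1N \sum_{|j/N|<\omega_0 \text{ (mod }1)} f(\lambda(j/N)) + o(1) \xrightarrow[N\to\infty]{} \int_{-\omega_0}^{\omega_0} f(\lambda(\omega))\dd\omega,
\end{equation*}
which identifies the limit measure as the pushforward $\lambda_*(\dd\omega|_{(-\omega_0,\omega_0)})$, restricted to $[1-\delta_0,1]$. Computing its density by the change of variables $x = \lambda(\omega)$: since $\lambda$ is $2$-to-$1$ onto its image with $\lambda(\omega)\approx 1-2\pi^2\Sigma_{11}\omega^2$, we get $\dd\omega = \frac{\dd\omega}{\dd x}\dd x$ with $|\dd\omega/\dd x| \sim \frac{1}{2\sqrt{2\pi^2\Sigma_{11}}\sqrt{1-x}}$, and multiplying by $2$ for the two preimages yields
\begin{equation*}
 \frac{\de\nu_{|[1-\delta_0,1]}^{(1)}}{\de x} \sim_{x\to1^-} \frac{2}{2\sqrt{2\pi^2\Sigma_{11}}}\cdot\frac1{\sqrt{1-x}} = \frac{1}{\pi\sqrt{2\Sigma_{11}}}\cdot\frac1{\sqrt{1-x}},
\end{equation*}
as claimed. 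The main obstacle I expect is the analytic perturbation theory on $\Bcal$: one must verify that $\omega \mapsto P_\omega$ is a holomorphic family of bounded operators on the Demers–Zhang space $\Bcal$ (the multiplier $e^{-2\pi\ii\omega F}$ is only piecewise constant, matching the singularity structure of $T$, so this requires checking that multiplication by such functions is bounded on $\Bcal$ and depends analytically on $\omega$), and that the essential spectral radius bound $\rho_0$ is uniform in $\omega$. Given the twisted-operator framework that the paper sets up for Theorem~\ref{thm:ExistenceResonances}, this groundwork is presumably already in place; the remaining work — equidistribution of $j/N$, the second-order expansion of $\lambda$, and the final change of variables — is then routine.
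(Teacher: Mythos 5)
Your proposal is correct and follows essentially the same route as the paper: decompose $P_{\Lambda_N^{(1)}}$ over characters of $G = \Z/N\Z$ into twisted operators $P_\omega$, pass from the Riemann sum $\frac{1}{N}\sum_j \mu_{j/N}$ to the integral $\int \mu_\omega\,\dd\omega$ as $N\to\infty$, use aperiodicity plus a compactness argument to localize the contribution to a half-neighbourhood of $\omega=0$, and then change variables through the quadratic expansion $\lambda(\omega) = 1 - 2\pi^2\Sigma_{11}\omega^2 + O(\omega^3)$ (your constant matches the paper's $\lambda_w = 1 - \frac{1}{2}\Sigma(w,w) + O(|w|^3)$ after the rescaling $w = 2\pi\omega$). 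The only stylistic difference is in the last step: where you perform the change of variables directly from the second-order Taylor expansion and the inverse-function theorem, the paper invokes the Morse lemma to write $\lambda_{te_1}$ exactly as $1 - \Sigma_{11}\Psi(t)^2/2$ with a $\Ccal^1$ diffeomorphism $\Psi$ having $\Psi'(0)=1$, then controls the error $|\Psi'|-1$ on a small support. The two computations give the same density $\frac{1}{\pi\sqrt{2\Sigma_{11}}}(1-x)^{-1/2}$ and are genuinely equivalent; the Morse-lemma version is slightly cleaner for bookkeeping the non-quadratic remainder, but nothing is gained or lost. All the analytic-perturbation and aperiodicity groundwork you flag as "presumably in place" is indeed provided earlier in the paper (via the reference to Demers--Zhang for analyticity of $w\mapsto P_w$, and Lemma 3.2 for aperiodicity), so your argument closes without gaps.
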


In Equation~\eqref{eq:Distribution1}, the constant $\Sigma_{11}$ is the 
upper-left coefficient of the covariance matrix $\Sigma$ associated with the diffusion 
of the billiard on the table $\widetilde{Q}$ (see Equation~\eqref{eq:NagaevGuivarch}).
Replacing $\Lambda_N^{(1)}$ by $\Z \times N \Z$ only changes the constant 
in Equation~\eqref{eq:Distribution1}, where $\Sigma_{11}$ becomes $\Sigma_{22}$. 

\smallskip

We get an analogous statement for the sequence $(\Lambda_N^{(2)})$:

\begin{proposition}
\label{prop:DistributionResonancesDim2}
 
 Let $\delta$ be as in Theorem~\ref{thm:ExistenceResonances}.
 There exists $\delta_0 \in (0, \delta]$ and a finite measure 
 $\nu_{|[1-\delta_0,1]}^{(2)}$ on $[1-\delta_0, 1]$ such that:
 \begin{equation*}
  \lim_{N \to + \infty} \nu_{N|[1-\delta_0,1]}^{(2)} 
  = \nu_{|[1-\delta_0,1]}^{(2)},
 \end{equation*}
 where the convergence is in $\Ccal([1-\delta_0,1], \C)^*$. Moreover,
 \begin{equation}
 \label{eq:Distribution2}
  \lim_{x \to 1^-} \frac{\de \nu_{|[1-\delta_0,1]}^{(2)}}{\de x} 
  = \frac{1}{2 \pi \sqrt{\det (\Sigma)}}.
 \end{equation}
\end{proposition}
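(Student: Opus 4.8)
The plan is to adapt the proof of Proposition~\ref{prop:DistributionResonancesDim1}, replacing the one-parameter family of characters of $\Z/N\Z$ by the two-parameter family of characters of $G = \Z^2/(N\Z)^2 \cong (\Z/N\Z)^2$. As in the proofs of Theorem~\ref{thm:ExistenceResonances} and Proposition~\ref{prop:CompactenessSpectralMeasures}, Fourier analysis on $G$ decomposes $P_{\Lambda_N^{(2)}}$ acting on $\Bcal_{\Lambda_N^{(2)}}$ into a direct sum of twisted transfer operators $P_\xi$ acting on $\Bcal$, where $\xi$ runs over $\widehat G$, which I identify with the grid $\frac1N\Z^2/\Z^2 \subset \Tbb^2 = \widehat{\Z^2}$. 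Counting with multiplicity, the resonances of $P_{\Lambda_N^{(2)}}$ outside $\overline B(0,\rho_0)$ are exactly those of the operators $P_\xi$, $\xi\in\widehat G$, so that
\begin{equation*}
 \nu_N^{(2)} = \frac{1}{N^2} \sum_{\xi \in \frac1N\Z^2/\Z^2} \sum_{\lambda \text{ res. of } P_\xi,\ |\lambda| > \rho_0} \delta_\lambda .
\end{equation*}

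Next I would isolate the perturbative regime. The $\Z^2$-extension is aperiodic (as used for Theorem~\ref{thm:ExistenceResonances}), so the spectral radius of $P_\xi$ on $\Bcal$ is strictly less than $1$ for every $\xi \neq 0$ in $\Tbb^2$; by upper semicontinuity of the part of the spectrum outside a fixed disk and compactness of $\Tbb^2 \setminus B(0,r_0)$, there is $\delta_1 > 0$ with $\Sp(P_\xi \acts \Bcal) \subset \overline B(0,1-\delta_1)$ whenever $|\xi| \geq r_0$. On $\{|\xi| < r_0\}$, analytic perturbation theory around the simple resonance $1$ of $P_0 = P$ produces a unique resonance $\lambda(\xi)$ of $P_\xi$ outside $\overline B(0,1-\delta_1)$; it is simple, $\xi \mapsto \lambda(\xi)$ is real-analytic with $\lambda(0) = 1$, it is real-valued (by the time-reversal symmetry, as for the resonances close to $1$ in Theorem~\ref{thm:ExistenceResonances}), and by Equation~\eqref{eq:NagaevGuivarch} one has $\lambda(\xi) = 1 - 2\pi^2 \langle \Sigma\xi,\xi\rangle + o(|\xi|^2)$ as $\xi \to 0$. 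Since the billiard diffusion on $\widetilde Q$ is non-degenerate, $\Sigma$ is positive definite, so $\lambda$ has a strict non-degenerate maximum at $0$, with $\lambda(\xi) < 1$ for $0 < |\xi| < r_0$. After shrinking $r_0$ and choosing $\delta_0 \in (0,\delta]$ with $\delta_0 \leq \delta_1$, $1-\delta_0$ not a critical value of $\lambda$, and $\{\xi : \lambda(\xi) \geq 1-\delta_0\}$ a compact subset of $\{|\xi| < r_0\}$, the resonances of $P_{\Lambda_N^{(2)}}$ in $[1-\delta_0,1]$ are exactly the values $\lambda(\xi)$, $\xi\in\widehat G$, with $\lambda(\xi) \geq 1-\delta_0$, each counted once; hence for every $g \in \Ccal([1-\delta_0,1],\C)$,
\begin{equation*}
 \int g \dd \nu_{N|[1-\delta_0,1]}^{(2)} = \frac{1}{N^2} \sum_{\xi \in \frac1N\Z^2/\Z^2} (g\circ\lambda)(\xi)\, \mathbf 1_{\{\lambda(\xi) \geq 1-\delta_0\}} .
\end{equation*}

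The right-hand side is a Riemann sum over the uniform grid of mesh $1/N$ for the bounded function $\xi \mapsto (g\circ\lambda)(\xi)\, \mathbf 1_{\{\lambda(\xi) \geq 1-\delta_0\}}$ on $\Tbb^2$, whose discontinuity set lies in the level curve $\{\lambda = 1-\delta_0\}$ of zero Lebesgue measure; such a function is Riemann integrable, so the sum converges to $\int_{\Tbb^2} (g\circ\lambda)\, \mathbf 1_{\{\lambda \geq 1-\delta_0\}} \dd\xi$. Thus $\nu_{N|[1-\delta_0,1]}^{(2)} \to \nu_{|[1-\delta_0,1]}^{(2)} := \lambda_*\big(\Leb|_{\{\xi : \lambda(\xi) \geq 1-\delta_0\}}\big)$ in $\Ccal([1-\delta_0,1],\C)^*$, a finite measure. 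To identify its density near $1$, write $\xi = \Sigma^{-1/2}\eta$, so $\dd\xi = (\det\Sigma)^{-1/2}\dd\eta$ and $\lambda(\Sigma^{-1/2}\eta) = 1 - 2\pi^2|\eta|^2 + o(|\eta|^2)$; in polar coordinates $r = |\eta|$, pushing $(\det\Sigma)^{-1/2}\, 2\pi r \dd r$ forward by $r \mapsto x = 1 - 2\pi^2 r^2 + o(r^2)$ yields $\frac{\de\nu_{|[1-\delta_0,1]}^{(2)}}{\de x} \to \frac{1}{2\pi\sqrt{\det\Sigma}}$ as $x \to 1^-$, which is~\eqref{eq:Distribution2}. (Equivalently, $\nu_{|[1-\delta_0,1]}^{(2)}([x,1]) = \Leb\{\xi : \lambda(\xi) \geq x\} = \frac{1-x}{2\pi\sqrt{\det\Sigma}}(1+o(1))$.)

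The delicate inputs here — uniform quasi-compactness of the family $(P_\xi)_{\xi \in \Tbb^2}$ on $\Bcal$, the appropriate continuity of $\xi \mapsto P_\xi$, analytic perturbation of the leading resonance, and the aperiodicity of $F$ ruling out resonances near $1$ for $\xi$ bounded away from $0$ — are the same as those already established for Theorem~\ref{thm:ExistenceResonances} and Proposition~\ref{prop:CompactenessSpectralMeasures}, and I would invoke them directly; the only genuinely new ingredient is the passage from a one- to a two-dimensional Riemann sum together with the Gaussian change of variables, which is routine once the quadratic expansion~\eqref{eq:NagaevGuivarch} of $\lambda$ is in hand. I expect the one point requiring a little extra care, specific to this statement, to be the verification that the pushforward density extends continuously up to the endpoint $x = 1$ with the stated value — and not merely that the cumulative mass $\nu_{|[1-\delta_0,1]}^{(2)}([x,1])$ is asymptotic to $(1-x)/(2\pi\sqrt{\det\Sigma})$ — which is why I would carry out the explicit polar-coordinates (coarea) computation above.
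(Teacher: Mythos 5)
Your argument is correct and matches the paper's approach: the paper proves Proposition~\ref{prop:DistributionResonancesDim1} by citing the Riemann-sum convergence from the discussion after \cite[Theorem~3.1]{JakobsonNaudSoares:2019} and then a Morse-lemma change of variables, and states that Proposition~\ref{prop:DistributionResonancesDim2} ``follows from the same kind of computations''; your proof is exactly that computation carried out in dimension $2$, with the Riemann-sum step derived directly from the spectral decomposition~\eqref{eq:EgaliteSpectres} and aperiodicity rather than cited, and with the coarea formula in place of the Morse lemma --- both essentially equivalent to the paper's route.
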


\section{Existence of resonances}
\label{sec:ExistenceResonance}

In this section, we prove a weaker version of Theorem~\ref{thm:ExistenceResonances}:

\begin{proposition}
 \label{prop:ExistenceResonancesFaible}
 
 Let $(M,\mu,T)$ be a finite horizon Sinai billiard.
 Let $U$ be a neighborhood of $1$ in $\C$. There exists a constant 
 $c(U)>0$ such that, for any rank $2$ lattice $\Lambda<\Z^2$, 
 the spectrum of $P_\Lambda$ acting on $\Bcal_\Lambda$ admits at least 
 $c(U) |G|$ Ruelle resonances (with multiplicities) 
 in $U$.
 
 \smallskip
 
 In particular, if $\Lambda$ is sparse enough, 
 then $P_\Lambda$ admits non-trivial Ruelle resonances in $U$.
\end{proposition}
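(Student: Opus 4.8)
The plan is to relate the transfer operator $P_\Lambda$ on the cover to a family of twisted transfer operators on the base, diagonalize the $G$-action by Fourier analysis on the finite abelian group $G = \Z^2/\Lambda$, and then use a perturbative argument around the character $\chi = 1$ to produce resonances of $P_\Lambda$ near $1$. Concretely, for each character $\chi \in \widehat{G}$ define the twisted operator $P_\chi$ acting on $\Bcal$ by $P_\chi(h) = \chi(F \circ T^{-1}) \cdot (h \circ T^{-1})$ (equivalently, the transfer operator twisted by the cocycle $\chi \circ F$). Since $L^2(G)$ decomposes as $\bigoplus_{\chi \in \widehat{G}} \C_\chi$ under translation, the operator $P_\Lambda$ acting on $\Bcal_\Lambda \cong \bigoplus_{\chi} \Bcal$ is conjugate to $\bigoplus_{\chi \in \widehat{G}} P_\chi$. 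Hence $\Sp(P_\Lambda \acts \Bcal_\Lambda) = \bigcup_{\chi \in \widehat{G}} \Sp(P_\chi \acts \Bcal)$, counted with multiplicity, and in particular it suffices to produce, for a definite proportion of characters $\chi$, an eigenvalue of $P_\chi$ inside $U$.

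The key analytic input is that $\chi \mapsto P_\chi$ is a continuous (indeed real-analytic, since $F$ is bounded and integer-valued so $\chi \circ F$ takes finitely many values) family of bounded operators on $\Bcal$, with $P_1 = P$ having a simple isolated eigenvalue at $1$ with a spectral gap. By standard analytic perturbation theory of isolated eigenvalues (Kato), there is a neighborhood $\Vcal$ of the trivial character in $\widehat{G}$ — uniform in $\Lambda$, because everything is expressed in terms of the fixed base system $(M,\mu,T)$ and the fixed bounded function $F$ — on which $P_\chi$ has a simple eigenvalue $\lambda(\chi)$ depending continuously on $\chi$ with $\lambda(1) = 1$, and the rest of the spectrum of $P_\chi$ stays uniformly bounded away from $1$. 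Shrinking $\Vcal$ if necessary, $\lambda(\chi) \in U$ for all $\chi \in \Vcal$. Here one must be slightly careful: $\widehat{G}$ is a finite set, so "neighborhood of $1$ in $\widehat{G}$" must be interpreted as the characters lying within a fixed distance of the trivial character in the natural embedding $\widehat{G} \hookrightarrow \widehat{\Z^2} = \Tbb^2$; the perturbation bound is applied in the ambient parameter $t \in \Tbb^2$ via $P_t(h) = e^{2\pi i \langle t, F\circ T^{-1}\rangle}(h\circ T^{-1})$, and then restricted to those $t$ that descend to characters of $G$.

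It then remains to count: one needs $|\{\chi \in \widehat{G} : \chi \in \Vcal\}| \geq c(U)|G|$ for some $c(U)>0$ independent of $\Lambda$. Since $\Vcal$ is a fixed open neighborhood of $0$ in $\Tbb^2$ and $\widehat{G} = \Lambda^*/\Z^2 \cdots$ — more precisely $\widehat{\Z^2/\Lambda}$ is identified with the finite subgroup $(\Lambda^\vee)^*$ of $\Tbb^2$ of cardinality $|G|$ that is "equidistributed at scale $\mathrm{Covol}(\Lambda)^{-1/2}$" — this is not automatic for \emph{every} $\Lambda$: a long thin lattice could have all its dual characters outside a fixed ball. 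This is the main obstacle, and it is precisely why the statement of Theorem~\ref{thm:ExistenceResonances} restricts the shape of the lattices or why Proposition~\ref{prop:ExistenceResonancesFaible} is stated as is. The fix is to not use a fixed Euclidean ball but to exploit that $P_t$ is $\Z^2$-periodic in $t$: the perturbation estimate is valid on \emph{some} open set $\Vcal \subset \Tbb^2$ containing $0$, and since $\widehat{\Z^2/\Lambda}$ is a \emph{subgroup} of $\Tbb^2$ of order $|G|$, a pigeonhole/covering argument shows that any subgroup of $\Tbb^2$ of order $n$ has at least $c(\Vcal) \cdot n$ elements in $\Vcal$ — one tiles $\Tbb^2$ by translates of a small box inside $\Vcal$ and uses that the subgroup, being uniformly distributed with respect to its own counting measure against characters, puts a proportional mass in each box; alternatively, one notes directly that for a finite subgroup $H \leq \Tbb^2$, the counting measure $\frac{1}{|H|}\sum_{h\in H}\delta_h$ assigns mass at least $\mathrm{Leb}(\Vcal - \text{(correction)})$-ish to $\Vcal$ by self-similarity of $H$ under its own translation. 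Granting this combinatorial lemma, we get at least $c(U)|G|$ characters $\chi$ with $\lambda(\chi) \in U$, hence at least $c(U)|G|$ resonances of $P_\Lambda$ in $U$ counted with multiplicity, and in particular for $|G|$ large (i.e. $\Lambda$ sparse) at least one of these is $\neq 1$, giving a non-trivial resonance. The routine verifications left aside are: the continuity/analyticity of $t \mapsto P_t$ on $\Bcal$ (which follows from boundedness of $F$ and the multiplier structure, using that $\Bcal$ is stable under multiplication by the relevant piecewise-constant functions — this uses that $F$ is constant on domains of continuity of $T$), and the identification $\Bcal_\Lambda \cong \bigoplus_\chi \Bcal$ as a Banach-space isomorphism intertwining $P_\Lambda$ with $\bigoplus_\chi P_\chi$, which is immediate from the stated formula $\|h\|_{\Bcal_\Lambda} = \max_{g\in G}\|h\mathbf{1}_{M\times\{g\}}\|_{\Bcal}$ and the definition of $T_\Lambda$.
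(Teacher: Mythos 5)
Your approach is the same as the paper's: decompose $\Bcal_\Lambda$ over $\widehat{G}$, identify $P_\Lambda$ with $\bigoplus_\chi P_\chi$, apply analytic perturbation theory around the trivial character, and reduce to a counting lemma for finite subgroups of the torus. You also correctly identify that the only genuinely delicate step is the counting lemma, and that the subgroup structure of $\widehat{G} < 2\pi\Tbb^2$ is what saves the day.

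However, your two sketches of that lemma are not quite right, and one is actually false. The first — ``tile $\Tbb^2$ by boxes and use that $H$ puts proportional mass in each box'' — fails: for $\Lambda = N\Z\times\Z$ the subgroup $\widehat{G}$ is supported on a single line, so most boxes receive zero mass. The second (``self-similarity'') gestures at the right idea but does not specify what is being translated or averaged. The paper's clean version, which you should record, is: choose $W \ni 0$ open with $W - W \subset V$; the function $\chi' \mapsto |\widehat{G}\cap(\chi'-W)|$ is $\widehat{G}$-invariant, so if it is maximal at $\chi'$ with some $\chi_0 \in \widehat{G}\cap(\chi'-W)$, then $|\widehat{G}\cap(\chi'-W)| = |\widehat{G}\cap(\chi'-\chi_0-W)| \le |\widehat{G}\cap(W-W)| \le |\widehat{G}\cap V|$; integrating $\sum_{\chi\in\widehat{G}}\mathbf{1}_{\chi+W}$ over $2\pi\Tbb^2$ then gives $|G|\Vol(W) \le 4\pi^2\,|\widehat{G}\cap V|$, which is exactly the desired $c(U)|G|$ lower bound. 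Finally, a small correction to your parenthetical: Theorem~\ref{thm:ExistenceResonances} does \emph{not} restrict the shape of the lattices — it holds for all rank $2$ lattices, precisely because the counting lemma holds unconditionally.
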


Theorem~\ref{thm:ExistenceResonances} shall follow from Proposition~\ref{prop:ExistenceResonancesFaible} 
and some additional results on the localization of Ruelle resonances proved in Section~\ref{sec:DistributionResonances}.

\subsection{Spectral decomposition of $\Bcal_\Lambda$}
\label{subsec:ExistenceResonance}

The map $M_\Lambda \to M$ is a Galois covering, the deck transformations being 
translations $\tau_g : M_\Lambda \to M_\Lambda$ given by $\tau_g (x, g') = (x, g'+g)$, 
for all $g \in G$. All these translations are measure-preserving and commute with $T_\Lambda$:
\begin{equation*}
 \tau_g \circ T_\Lambda (x, g') 
 = (T(x), g'+F(x)+g [\Lambda])
 =  T_\Lambda \circ \tau_g (x, g').
\end{equation*}
As a consequence, $G$ acts on $\Bcal_\Lambda$ by pre-composition:
\begin{equation*}
 (\tau_g h_\Lambda) (\varphi)
 := h_\Lambda (\varphi \circ \tau_{-g})
\end{equation*}
for all $h_\Lambda \in \Bcal_\Lambda$ and $\varphi \in \Ccal^1 (M_\Lambda, \C)$, 
and this action commutes with $P_\Lambda$.

\smallskip

Let $g \in G$. Since $\tau_g$ and $P_\Lambda$ commute, the eigenspaces of $\tau_g$ 
are $P_\Lambda$-invariant. As this holds for all $g \in\tau_g$,  
the intersections of the eigenspaces for $(\tau_g)_{g \in G}$ are $P_\Lambda$-invariant. 
But these eigenspaces are given by the characters of $G$:
\begin{equation*}
 \Bcal_{\Lambda, \chi} 
 := \{h \otimes \chi: \ h \in \Bcal_\Lambda, \ \chi \in \widehat{G}\},
\end{equation*}
where $(h \otimes \chi) (\varphi_\Lambda) = \sum_{g \in G} \chi (g) h(\varphi_\Lambda (\cdot, g))$ for all $\varphi_\Lambda \in \Ccal^1 (M_\Lambda, \C)$. 
Note that we can define maps $\Pi_{\Lambda, \chi} : \Bcal_\Lambda \to \Bcal$ by:
\begin{equation*}
 \Pi_{\Lambda, \chi} (h_\Lambda) (\varphi) 
 := |G|^{-1} h_\Lambda (\varphi \otimes \overline{\chi}) \quad \forall \varphi \in \Ccal^1 (M, \C),
\end{equation*}
and the space $\Bcal_{\Lambda, \chi}$ can be written as:
\begin{equation*}
 \Bcal_{\Lambda, \chi}
 = \bigcap_{\substack{\chi \in \widehat{G} \\ \chi' \neq \chi}} \Ker (\Pi_{\Lambda, \chi'}).
\end{equation*}
In particular, the spaces $\Bcal_{\Lambda, \chi}$ are closed, and there is a $P_\Lambda$-invariant splitting:
\begin{equation*}
 \Bcal_\Lambda 
 = \bigoplus_{\chi \in \widehat{G}} \Bcal_{\Lambda, \chi}.
\end{equation*}

\subsection{Spectrum of $P_\Lambda$}

Each subspace $\Bcal_{\Lambda, \chi}$ is isomorphic to $\Bcal$, for instance \textit{via} $\Pi_{\Lambda, \chi}$ 
and its right inverse $h \mapsto h \otimes \chi$. 
The action of $P_\Lambda$ on $\Bcal_{\Lambda, \chi}$ is thus conjugated with the action 
of some operator $P_{\Lambda, \chi}$ on $\Bcal$, which can be made explicit. For all $h \in \Bcal$ and 
$\varphi \in \Ccal^1 (M, \C)$,
\begin{align*}
 P_{\Lambda, \chi} (h) (\varphi) 
 & = [\Pi_{\lambda, \chi} P_\Lambda (h \otimes \chi)] (\varphi) \\
 & = \frac{1}{|G|} [P_\Lambda(h \otimes \chi)] (\varphi \otimes \overline{\chi}) \\
 & = \frac{1}{|G|} (h \otimes \chi) (\varphi \otimes \overline{\chi} \circ T_\Lambda) \\
 & = \frac{1}{|G|} (h \otimes \chi) [(\overline{\chi} (F) \cdot \varphi \circ T) \otimes \overline{\chi}] \\
 & = h (\chi (-F) \cdot \varphi \circ T) \\
 & = [P ( \chi (-F) h )] (\varphi).
\end{align*}
Hence, $P_{\Lambda, \chi} (h) = P ( \chi (-F) h )$. More intuitively, for $h \in \Ccal^0 (M, \C)$, 
we have $P_{\Lambda, \chi} (h \otimes \chi) = (h \otimes \chi) \circ T_\Lambda^{-1}$, with:
\begin{equation*}
 T_\Lambda^{-1} (x,p) 
 = (T^{-1} (x), p-F \circ T^{-1} (x)),
\end{equation*}
from which the same result follows. As a consequence,
\begin{equation}
 \label{eq:EgaliteSpectres}
 \Sp (P_\Lambda \acts \Bcal_\Lambda) 
 = \bigcup_{\chi \in \widehat{G}} \Sp (P_{\Lambda, \chi} \acts \Bcal), 
\end{equation}
where the union is taken with multiplicities.

\smallskip

Note that the estimate given in~\cite[Proposition~2.3]{DemersZhang:2012} on the essential spectral radius 
of $P$ also holds for $P_\Lambda$, for all $\Lambda$. There are two ways to prove this theorem:
\begin{itemize}
 \item That bound only depends on the minimal curvature of the obstacles, 
 the minimal free path length of the bouncing particle, and on the choice of $\varepsilon_0$. 
 These quantities are the same for the billiard table $M$ and for all its covers $M_\Lambda$.
 \item The estimates on the essential spectral radius of~\cite[Proposition~2.3]{DemersZhang:2012} 
 generalize to the weighted operators $P_{\Lambda, \chi}$, with no dependence on the character 
 $\chi$. Equation~\eqref{eq:EgaliteSpectres} yields the claim.
\end{itemize}

\subsection{Perturbations of transfer operators}

The function $F : M \to \Z^2$ satisfies the assumptions of~\cite[Lemma~5.4]{DemersZhang:2014}. 
Hence, the family of twisted transfer operators:
\begin{equation*}
 P_w (h) 
 := P (e^{i \langle w, F \rangle} h),
\end{equation*}
acting on $\Bcal$, depends analytically on $w \in 2 \pi \Tbb^2$.

\smallskip

As $1$ is an isolated eigenvalue of $P$ with eigenfunction $\mathbf{1}$, 
there exists a neighbourhood $U$ of $0$ in $2 \pi \Tbb^2$ and analytic functions 
$w \mapsto \lambda_w \in \C$, and $w \mapsto g_w \in \Bcal$ defined on $U$ such 
that $g_w (\mathbf{1}) = 1$ and $P_w (h) = \lambda_w g_w$.

\smallskip

By a classical computation, appearing for instance in the Nagaev-Guivarc'h proof of the Central Limit Theorem 
for Markov chains~\cite{Nagaev:1957, Nagaev:1961, GuivarchHardy:1988}, 
\begin{equation}
 \label{eq:NagaevGuivarch}
 \lambda_w 
 = 1 - \frac{\Sigma (w,w)}{2} + O (|w|^3)
\end{equation}
near $0$, where $\Sigma$ is a bilinear form. As $\Sigma$ is the matrix of covariance for the 
central limit theorem for $(\sum_{k=0}^{n-1} F \circ T^k)$ and such a 
central limit theorem for a finite horizon Sinai billiard has a non-degenerate limit law, 
$F$ is not a coboundary and $\Sigma$ is positive definite. In particular, up to taking a smaller neighborhood $U$, 
we shall assume that $|\lambda_w|<1$ for $w \neq 0$.

\subsection{Ruelle resonances for Sinai billiards}

We now prove Proposition~\ref{prop:ExistenceResonancesFaible}. 

\begin{proof}[Proposition~\ref{prop:ExistenceResonancesFaible}]
 
Let $U$ be a neighborhood of $1$ in $\C$. Let $V$ be a neighborhood 
of $0$ in $2 \pi \Tbb^2$ on which the main eigenvalue $\lambda_w$ of $P_w$ is well-defined 
and belongs to $U$. Let $W$ be a neighborhood of $0$ such that 
$W-W \subset V$. Let $\Lambda < \Z^2$. Note that $\widehat{G} = \widehat{(\Z^2 / \Lambda)} <  \widehat{\Z^2} = 2 \pi \Tbb^2$. 
Then:
\begin{equation}
 \label{eq:BorneNombreResonances0}
 |G| \Vol (W) 
 = \int_{2 \pi \Tbb^2} \sum_{\chi \in \widehat{G}} \mathbf{1}_{\chi+W} \dd \Vol 
 \leq 4 \pi^2 \max_{2 \pi \Tbb^2} \sum_{\chi \in \widehat{G}} \mathbf{1}_{\chi+W}.
\end{equation}
In addition, for all $\chi' \in 2 \pi \Tbb^2$, 
\begin{equation*}
 \sum_{\chi \in \widehat{G}} \mathbf{1}_{\chi+W} (\chi') 
 = |\widehat{G} \cap (\chi'-W)|.
\end{equation*}
Assume that $\widehat{G} \cap (\chi'-W)$ is non-empty, and let $\chi_0$ be one of its elements. 
The function $\chi \mapsto |\widehat{G} \cap (\chi-W)|$ is $\widehat{G}$-invariant, 
so $ |\widehat{G} \cap (\chi'-W)| =  |\widehat{G} \cap (\chi'-\chi_0-W)|$. But $\chi_0 \in \chi'-W$, 
so $\chi'-\chi_0 \in W$ and $\chi'-\chi_0-W \subset W-W \subset V$. Hence, $|\widehat{G} \cap (\chi'-W)| \leq |\widehat{G} \cap V|$. 
This inequality is also true if $\widehat{G} \cap (\chi'-W)$ is empty, so Equation~\eqref{eq:BorneNombreResonances0} 
implies:
\begin{equation}
 \label{eq:BorneNombreResonances}
 |G| \Vol (W) 
 \leq 4 \pi^2 |\widehat{G} \cap V|.
\end{equation}
Finally, given any $\chi = e^{i \langle w, \cdot \rangle}\in \widehat{G} \cap V$, 
the operator $P_w = P_{\Lambda, \overline{\chi}}$ admits $\lambda_w$ 
as a Ruelle resonance. By construction, $\lambda_w \in U$, and $\lambda_w$ 
is also a resonance of $P_\Lambda$ by Equation~\eqref{eq:EgaliteSpectres}. 
Hence, $P_\Lambda$ admits at least $|\widehat{G} \cap V| \geq \frac{\Vol (W)}{4\pi^2}|G|$ 
Ruelle resonances in $U$ (with multiplicities).
\end{proof}

\section{Distribution of resonances}
\label{sec:DistributionResonances}

We now focus on the properties of the non-trivial resonances constructed in Section~\ref{sec:ExistenceResonance}. 
We shall finish the proof of Theorem~\ref{thm:ExistenceResonances} in Subsections~\ref{subsec:BillardAperiodique} 
and~\ref{subsec:ResonancesReelles}, where we describe more precisely the main eigenvalue $\lambda_w$ 
of the twisted transfer operator $P_w$. We shall prove Propositions~\ref{prop:CompactenessSpectralMeasures}, 
\ref{prop:DistributionResonancesDim1} and~\ref{prop:DistributionResonancesDim2} in Subsection~\ref{subsec:DistributionResonances}.

\subsection{Aperiodicity of the Sinai billiard}
\label{subsec:BillardAperiodique}

As a step-stone to Theorem~\ref{thm:ExistenceResonances}, we shall prove that Sinai billiards are aperiodic. 
While this is already known~\cite{SzaszVarju:2004}, the following argument is reasonably short. 
We write characters in exponential form: $\chi = e^{i \langle w, \cdot \rangle}$. 
Let $H := \{(\rho, w) \in \Sbb_1 \times 2\pi \Tbb^2 | \ \rho \in \Sp (P_w \acts \Bcal) \}$. We claim:

\begin{lemma}
\label{lem:SpectrePeripherique}
 
 The set $H$ is a subgroup of $\Sbb_1 \times 2\pi \Tbb^2$. 
\end{lemma}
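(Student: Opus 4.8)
The set $H$ collects those pairs $(\rho, w)$ on the unit circle cross the dual torus for which $\rho$ is an eigenvalue of the twisted transfer operator $P_w$ acting on $\Bcal$. Since $\Sbb_1 \times 2\pi\Tbb^2$ is already a group, I only need to check that $H$ is nonempty, closed under the group operation, and closed under inversion. Nonemptiness is immediate: $(1, 0) \in H$ because $P_0 = P$ has eigenvalue $1$. For the remaining two properties, the natural idea is that eigenfunctions of $P_w$ on $\Bcal$ are genuine (signed or complex) measures or distributions, and that the relation $P_w(h) = \rho h$ should be, up to taking absolute values, a statement about the peripheral spectrum of the ordinary transfer operator $P$, which has a spectral gap. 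The key algebraic mechanism will be: if $P_w h = \rho h$ and $P_{w'} h' = \rho' h'$, then one expects $P_{w+w'}(h h') = \rho\rho'\, (h h')$ — formally because $P_w$ is conjugation of $P$ by multiplication by $e^{i\langle w, F\rangle}$ along orbits, and these weights multiply. Similarly, if $P_w h = \rho h$ then the time-reversal involution $\iota$ together with the identity $F \circ \iota = -F \circ T^{-1}$ should produce an eigenfunction of $P_{-w}$ with eigenvalue $\bar\rho = \rho^{-1}$ (using $|\rho| = 1$), giving closure under inversion.

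\textbf{Steps in order.} First I would pin down what an eigenfunction of $P_w$ with unimodular eigenvalue looks like. The cleanest route is to pass to the dynamical picture on a cover or, more directly, to observe that $|h|$ (or the total variation measure of $h$, viewed in an $\LL^1$ or measure sense, using the embedding $\Bcal \to \Ccal^{1/3}(T^{-n}\Wcal^s)^*$ and the fact that the relevant eigenfunctions are in fact densities with respect to $\mu$) satisfies $P|h| \geq |P_w h| = |h|$; since $P$ preserves $\int \cdot\, \de\mu$ and $1$ is a simple eigenvalue with strictly positive "eigenmeasure", one forces $P|h| = |h|$, hence $|h|$ is (a multiple of) the constant function $\mathbf 1$, i.e. $|h| = 1$ a.e. Thus any such $h$ is unimodular and, rescaling, $h = e^{i\phi}$ for a measurable $\phi : M \to \R/2\pi\Z$, and the eigenvalue equation becomes the cohomological relation $\phi \circ T - \phi = \langle w, F\rangle - \arg\rho$ modulo $2\pi$ (here I am using that equality in $P|h| = |h|$ propagates pointwise along orbits, a standard argument for Ruelle–Perron–Frobenius operators with a gap). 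Second, with $h, h'$ two such solutions associated to $(\rho, w)$ and $(\rho', w')$, the product $h h'$ is again unimodular and solves the cohomological equation for $(\rho \rho', w + w')$; hence $(\rho\rho', w+w') \in H$, which gives closure under multiplication (and, taking $h' = \mathbf 1$ with $(\rho', w') = (1,0)$, reconfirms nonemptiness). Third, for inversion, apply $\iota$: set $\tilde h := \overline{h \circ \iota}$; using $\iota \circ T = T^{-1} \circ \iota$ and $F \circ \iota = -F\circ T^{-1}$, a short substitution shows $P_{-w} \tilde h = \overline{\rho}\, \tilde h = \rho^{-1} \tilde h$, so $(\rho^{-1}, -w) = (\rho,w)^{-1} \in H$.

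\textbf{Main obstacle.} The delicate point is the functional-analytic step: eigenfunctions of $P_w$ a priori live in the anisotropic distribution space $\Bcal$, which is not a space of functions, so "$|h|$" and "pointwise along orbits" are not literally meaningful. I expect the resolution is to use that $P_w = P(e^{i\langle w,F\rangle}\cdot)$ and that $e^{i\langle w,F\rangle}$ is a bounded multiplier (piecewise constant, via $F$), together with the known fact — or a short additional argument — that unimodular-eigenvalue eigenvectors of such twisted operators on these Demers–Zhang spaces are in fact represented by $\LL^\infty$ densities against $\mu$ (this kind of "the peripheral spectrum is carried by honest densities" statement is standard for transfer operators with a spectral gap, and should follow from comparing the $\Bcal$-action with the $\LL^1$-action and a Lasota–Yorke / perturbation argument). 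Once one is allowed to work with $\LL^\infty$ densities and the ordinary RPF operator $P$ with its gap, the three group axioms reduce to the elementary manipulations above with the cohomological equation. I would flag precisely this regularity-of-peripheral-eigenvectors point as the one requiring care, and otherwise present the modulus/coboundary argument and the $\iota$-symmetry computation as the heart of the proof.
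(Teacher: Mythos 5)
Your overall plan is right, and your identification of the delicate point (peripheral eigenvectors of $P_w$ must be represented by $\LL^\infty$ densities before one can take moduli and form products) matches the paper: the author invokes a generalization of~\cite[Lemma~5.1]{DemersZhang:2012} for exactly this, writes the eigenvalue equation as $\rho k = e^{i\langle w, F\rangle \circ T^{-1}}\, k\circ T^{-1}$, takes moduli and uses ergodicity to get $|k|$ constant, and then multiplies two such relations to obtain closure under multiplication, citing~\cite[Lemma~5.5]{DemersLiverani:2008} to check that the product density lies in $\Bcal$. Your multiplication step is the same argument; good.

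However, your inversion step contains a concrete sign error. You take $\tilde h := \overline{h\circ\iota}$ and claim $P_{-w}\tilde h = \overline{\rho}\,\tilde h$. Running the substitution carefully: from $P_w h=\rho h$ one gets $h\circ T=\rho^{-1}e^{i\langle w,F\rangle}h$, so
\begin{equation*}
\tilde h\circ T^{-1}=\overline{h\circ\iota\circ T^{-1}}=\overline{h\circ T\circ\iota}
=\overline{\rho^{-1}e^{i\langle w,F\circ\iota\rangle}h\circ\iota}
=\rho\, e^{i\langle w,F\circ T^{-1}\rangle}\,\tilde h,
\end{equation*}
using $\iota\circ T^{-1}=T\circ\iota$, $F\circ\iota=-F\circ T^{-1}$ and $|\rho|=1$; multiplying by $e^{-i\langle w,F\circ T^{-1}\rangle}$ gives $P_{-w}\tilde h=\rho\,\tilde h$, \emph{not} $\overline{\rho}\,\tilde h$. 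In other words, the time-reversal $\iota$ and the complex conjugation each flip $\rho\leftrightarrow\overline{\rho}$, so applying both cancels that flip: your construction shows $(\rho,-w)\in H$, which is not the group inverse $(\overline{\rho},-w)=(\rho,w)^{-1}$. The correct (and simpler) move, which is what the paper does, is to use complex conjugation alone: taking $\overline{\phantom{x}}$ in $\rho k = e^{i\langle w, F\rangle\circ T^{-1}}k\circ T^{-1}$ immediately yields $P_{-w}\overline{k}=\overline{\rho}\,\overline{k}$, hence $(\overline{\rho},-w)\in H$. A further reason to avoid $\iota$ here is functional-analytic: $\iota$ swaps the stable and unstable cones, so $h\circ\iota$ lives naturally in the dual-type space $\widetilde{\Bcal}$ rather than in $\Bcal$; the paper only brings in $\iota$ later (Subsection~\ref{subsec:ResonancesReelles}), where the necessary duality bookkeeping is done, precisely to prove $\lambda_{-w}=\lambda_w$ — which is the statement your computation actually recovers, and is used there to establish realness of the resonances, not closure under inversion.
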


\begin{proof}
 
 Note that, by considering $H \cap (\Sbb_1 \times \{0\})$, this lemma implies 
 that the peripheral spectrum of $P$ is a subgroup of $\Sbb_1$, which is well-known~\cite[Lemma~5.2]{DemersZhang:2012}. 
 The proof of Lemma~\ref{lem:SpectrePeripherique} mimics the proof of the later fact.
 
 \smallskip
 
 Up to straightforward modifications, the proof of~\cite[Lemma~5.1]{DemersZhang:2012} 
 can be generalized to prove that, for any $(\rho, w) \in H$, the corresponding Jordan block 
 is trivial and any associated eigendistribution belongs to $\Lbb^\infty (M, \mu) \cdot \de \mu$. 
 In addition, whenever $k$ is an eigendistribution associated with $(\rho, w) \in H$, 
 \begin{equation*}
  P_w (k \dd \mu) 
  = e^{i \langle w, F \rangle \circ T^{-1}} k \circ T^{-1} \dd \mu
  = \rho k \dd \mu,
 \end{equation*}
 and thus 
 \begin{equation}
  \label{eqref:EquationFonctionnelleRhoH}
  \rho k 
  = e^{i \langle w, F \rangle \circ T^{-1}} k \circ T^{-1}.
 \end{equation}

 Taking absolute values in Equation~\eqref{eqref:EquationFonctionnelleRhoH}, 
 we get $|k| \circ T^{-1} = |k|$; as the Sinai billiard is ergodic, 
 $|k|$ is constant, and $k$ does not vanish.
 
 \smallskip
 
 Taking the  complex conjugate in Equation~\eqref{eqref:EquationFonctionnelleRhoH}, 
 we get $\overline{\rho} \overline{k} = e^{i \langle -w, F \rangle \circ T^{-1}} \overline{k} \circ T^{-1}$, 
 so $\overline{k} \dd \mu$ is an eigendistribution for $P_{-w}$ for the eigenvalue $\overline{\rho}$. In addition, 
 $\overline{k} \dd \mu \in \Bcal$. Hence, $(\overline{\rho}, -w) \in H$.
 
 \smallskip

 Let $k_1 \dd \mu$, $k_2 \dd \mu$ be two eigendistributions corresponding to $(\rho_1,w_1)$, 
 $(\rho_2, w_2) \in H$ respectively. Then, again using Equation~\eqref{eqref:EquationFonctionnelleRhoH},
 \begin{equation*}
  e^{i \langle w_1+w_2, F \rangle \circ T^{-1}} (k_1 k_2) \circ T^{-1} 
  = e^{i \langle w_1, F \rangle \circ T^{-1}} k_1 \circ T^{-1} \cdot e^{i \langle w_2, F \rangle \circ T^{-1}} k_2 \circ T^{-1} 
  = \rho_1 k_1 \rho_2 k_2 
  = (\rho_1 \rho_2) k_1 k_2.
 \end{equation*}
 As neither $k_1$ not $k_2$ vanish, their product $k_1 k_2$ does not vanish either, 
 so $k_1 k_2$ is an eigendistribution for the eigenvalue $\rho_1 \rho_2$ of $P_{w_1+w_2}$. 
 In addition,  by the argument of~\cite[Lemma~5.5]{DemersLiverani:2008}, $k_1 k_2 \dd \mu \in \Bcal$. 
 Hence, $(\rho_1 \rho_2, w_1+w_2) \in H$. 
\end{proof}

All the arguments above are standard (they only use properties of the action of $P_w$ on $\Bcal$), 
and apply to a much wider class of dynamical systems. For Sinai billiards, Lemma~\ref{lem:SpectrePeripherique} 
can be strengthened:

\begin{lemma}
\label{lem:Aperiodicite}
 
 For a finite horizon Sinai billiard, $H = \{(1,0)\}$. 
\end{lemma}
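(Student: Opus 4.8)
The plan is to show that if $(\rho, w) \in H$ then $w = 0$, from which Lemma~\ref{lem:SpectrePeripherique} (applied to $H \cap (\Sbb_1 \times \{0\})$, together with the spectral gap for $P$) forces $\rho = 1$. Suppose then that $(\rho, w) \in H$, and let $k \in \Lbb^\infty(M,\mu)$ be an associated eigendistribution; by the previous lemma $|k|$ is constant and nonzero, so we may normalize $|k| \equiv 1$. Equation~\eqref{eqref:EquationFonctionnelleRhoH} reads $\rho k = e^{i \langle w, F\rangle \circ T^{-1}} k \circ T^{-1}$, or equivalently, writing $k = e^{i\psi}$ for a measurable $\psi : M \to \R/2\pi\Z$ and using $F \circ T^{-1} = -F \circ \iota$ and the reversibility relations, a cohomological equation of the form $\langle w, F \rangle = \psi \circ T - \psi + \arg\rho \pmod{2\pi}$. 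The goal is to exploit the time-reversal involution $\iota$ to kill both $w$ and $\arg\rho$.

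The key step is to feed the involution $\iota$ into the functional equation. Applying the identities $\iota \circ T = T^{-1} \circ \iota$ and $F \circ \iota = - F \circ T^{-1}$ and using that $\iota$ preserves $\mu$, I would check that $\overline{k} \circ \iota \dd\mu$ is an eigendistribution of $P_w$ (not $P_{-w}$!) for the eigenvalue $\rho$ — essentially because conjugation flips the sign of $w$ while $\iota$ flips it back. Since by Lemma~\ref{lem:SpectrePeripherique}-type reasoning (or directly from \cite[Lemma~5.1]{DemersZhang:2012}) the eigenspace for $(\rho,w)$ is one-dimensional, $\overline{k} \circ \iota = \alpha k$ for some $\alpha \in \Sbb_1$; iterating and using $\iota^2 = \id$ gives $|\alpha|=1$ with $\alpha \overline{\alpha} = 1$ automatically, but more importantly feeding this back produces a relation that pins down $\rho$. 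Running the same manipulation on the modulus-one peripheral eigenfunction and combining, one extracts that $\rho^2 = 1$, hence (peripheral spectrum being connected to $1$, or by the reality of $P$ and the gap) $\rho = 1$; then the functional equation becomes $\langle w, F\rangle = \psi \circ T - \psi \pmod {2\pi}$, i.e. $e^{i\langle w, F\rangle}$ is a measurable coboundary, equivalently $\chi = e^{i\langle w, \cdot\rangle} \in \widehat{G}$ is trivial on the subgroup of $\Z^2$ generated by the essential values of $F$.

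The final step is then precisely the aperiodicity of the Sinai billiard: the function $F$ is not contained in any proper affine sublattice of $\Z^2$, so $\langle F \rangle = \Z^2$, and hence $e^{i\langle w, F\rangle} \equiv 1$ forces $w \in 2\pi\Z^2$, i.e. $w = 0$ in $2\pi\Tbb^2$. This nondegeneracy is exactly what is guaranteed by the non-degeneracy of $\Sigma$ in Equation~\eqref{eq:NagaevGuivarch} (if $F$ took values in a proper sublattice or coset, the limiting Gaussian would be degenerate in some direction), and it also matches the cited result \cite{SzaszVarju:2004}. So the only remaining point is to make the "measurable coboundary $\Rightarrow$ lattice condition" implication rigorous: a measurable solution $\psi$ of $\langle w, F\rangle = \psi\circ T - \psi \pmod{2\pi}$ exists iff $w$ annihilates the group generated by $F$ on the ergodic system $(M,\mu,T)$, which is the standard characterization of aperiodicity for $\Z^d$-extensions.

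I expect the main obstacle to be the second step: correctly tracking signs through the time-reversal symmetry so that the involution genuinely maps the $(\rho,w)$-eigenspace to itself (rather than to the $(\overline\rho,-w)$-eigenspace, which is what plain conjugation gives), and then combining the resulting scalar relation with one-dimensionality of the eigenspace to conclude $\rho = 1$ rather than merely $\rho \in \Sbb_1$. Everything else — the reduction to a cohomological equation, the appeal to ergodicity for $|k|$ constant, and the translation of the coboundary condition into aperiodicity — is routine given the results already in the excerpt, in particular Lemma~\ref{lem:SpectrePeripherique} and the reversibility relations $\iota\circ T = T^{-1}\circ\iota$, $F\circ\iota = -F\circ T^{-1}$.
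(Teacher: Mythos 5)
Your proposal takes a genuinely different route from the paper, but it contains a concrete error in the central sign-tracking step, and it leans on a form of aperiodicity whose justification is not made rigorous.

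The sign computation for the time-reversal is wrong. Starting from the functional equation $e^{i\langle w,F\rangle\circ T^{-1}}k\circ T^{-1}=\rho k$, one checks that precomposition with $\iota$ alone \emph{preserves} $w$ and conjugates the eigenvalue: $k\circ\iota$ satisfies $P_w(k\circ\iota)=\overline\rho\,(k\circ\iota)$, because $\iota\circ T=T^{-1}\circ\iota$ inverts the dynamics (hence inverts the unimodular eigenvalue) while $F\circ\iota=-F\circ T^{-1}$ combines with the time-reversal to leave the argument of the exponential at $-F$, matching $P_w$ rather than $P_{-w}$. Conjugation flips both $w$ and $\rho$. Combining them, $\overline k\circ\iota$ is an eigendistribution of $P_{-w}$ with eigenvalue $\rho$ — not of $P_w$ with eigenvalue $\rho$, as you assert. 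So the planned one-dimensionality argument ($\overline k\circ\iota=\alpha k$ for a scalar $\alpha$) does not apply: the two distributions live in eigenspaces of different operators. You could instead compare $k$ and $k\circ\iota$, both eigendistributions of $P_w$ with eigenvalues $\rho$ and $\overline\rho$; but one-dimensionality of each eigenspace does not force $\rho=\overline\rho$ unless you already know the peripheral spectrum of $P_w$ is a single point. There is also an unaddressed technical issue: $k\circ\iota\cdot\mathrm{d}\mu$ need not a priori lie in $\Bcal$ (composition with $\iota$ swaps the stable/unstable anisotropy), which is precisely the obstruction the paper handles with the auxiliary space $\widetilde{\Bcal}$ in the later subsection on realness.

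Your last step — extracting $w=0$ from the coboundary relation — is also delicate. The relation is only $e^{i\langle w,F\rangle}=\psi\circ T/\psi$ mod~$2\pi$, so it does \emph{not} follow that $\langle w, S_nF\rangle$ is bounded, only that its reduction mod $2\pi$ is a coboundary; deducing from this and $\Sigma>0$ that $w=0$ requires a local-limit-type statement, which is essentially the aperiodicity you are trying to prove (the Lemma \emph{is} the aperiodicity statement $H=\{(1,0)\}$). The paper sidesteps all of this: from Lemma~\ref{lem:SpectrePeripherique}, $H$ is a closed subgroup of $\Sbb_1\times 2\pi\Tbb^2$; the expansion $\lambda_w=1-\Sigma(w,w)/2+O(|w|^3)$ with $\Sigma>0$ shows $(1,0)$ is isolated in $H$, so $H$ is discrete hence finite; any finite-order $(\rho,w)$ with $w\in\widehat G$ for some lattice $\Lambda$ produces a non-trivial peripheral eigenvalue of $P_\Lambda$, contradicting ergodicity (if $\rho=1$) or mixing (if $\rho\neq 1$) of the covering Sinai billiard $Q_\Lambda$. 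No time-reversal and no coboundary analysis is needed for this lemma; the involution only enters later, for the realness of the resonances near $1$.
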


\begin{proof}
 
 By the discussion in Subsection~\ref{subsec:ExistenceResonance}, 
 there exists a neighborhood $V$ of $0$ in $2 \pi \Tbb^2$ and a neighborhood 
 $U$ of $1$ in $\C$ such that, for all $w \in V$, we have $\Sp (P_w \acts \Bcal) \cap U = \{\lambda_w\}$. 
 Since $\lambda_w = 1 - \frac{\Sigma (w,w)}{2} + O (|w|^3)$, if $w \in V \setminus \{0\}$, then $P_w$ 
 has no eigenvalue of modulus $1$ in $U$. Hence, $(1,0)$ is isolated in $H$. The group $H$ is discrete, 
 and thus finite.
 
 \smallskip
 
 Assume that $H$ is not trivial, and let $(\rho, w) \in H \setminus \{(1,0)\}$. Since $H$ is finite, 
 $(\rho, w)$ has finite order. Hence, there exists a rank $2$ lattice $\Lambda < \Z^2$ such that $w \in \widehat{G}$. 
 Then $\rho \in \Sp (P_{\Lambda, \chi} \acts \Bcal)$ for $\chi = e^{- i \langle w, \cdot \rangle} \in \widehat{G}$, so 
 $\rho \in \Sp (P_\Lambda \acts \Bcal_\Lambda)$. Then the transfer operator $P_\Lambda$ for the Sinai billiard $Q_\Lambda$ 
 has non-trivial peripheral spectrum. If $\rho = 1$, then $1 \in \Sp (P_\Lambda \acts \Bcal_\Lambda)$ has multiplicity at least $2$, 
 which contradicts the ergodicity of Sinai billiards. If $\rho \neq 1$, then $\rho$ is a non-trivial root 
 of the unit, which contradicts the fact that Sinai billiards are mixing.
\end{proof}

The Ruelle spectrum of $w \mapsto P_w$ depends continuously on $w$. By Lemma~\ref{lem:Aperiodicite}, 
there exists $\delta >0$ and a neighborhood $V$ of $0$ in $2 \pi \Tbb^2$ 
such that $P_w$ has a resonance of modulus larger than $1-\delta$ if and only if 
$w \in V$, and under this condition the resonance is $\lambda_w$. 

\smallskip

By Equation~\eqref{eq:EgaliteSpectres}, the spectrum of $P_\Lambda$ acting on $\Bcal_\Lambda$ 
is the union of the spectra of the operators $P_w$ for $w \in \widehat{G}$. Hence, 
for this value of $\delta >0$ and all rank $2$ lattices $\Lambda < \Z^2$:
\begin{equation}
 \label{eq:StructureSpectre0}
 \Sp (P_\Lambda \acts \Bcal_\Lambda) 
 \subset \overline{B}(0,1-\delta) \cup \{\lambda_w : \ w \in V\}.
\end{equation}
A further consequence is that $P_\Lambda$ admits at most $|G|$ resonances (with multiplicities) 
in $\overline{B}(0,1-\delta)^c$. With Proposition~\ref{prop:ExistenceResonancesFaible}, this implies that the number 
of Ruelle resonances in the annulus $\{1-\delta < |z| \leq 1\}$ is a $\Theta (|G|)$.

\subsection{Realness of the resonances}
\label{subsec:ResonancesReelles}

Let $\delta>0$ be small enough that Equation~\eqref{eq:StructureSpectre0} holds. 
Let $V := \{w \in 2 \pi \Tbb^2 : \ |\lambda_w|>1-\delta\}$. Up to taking a smaller 
value of $\delta$, we may assume that $\lambda_\cdot$ is continuous on $V$. 
Let $w \in V$, and $h_w \in \Bcal$ an eigendistribution for $P_w$.
Since the operator $P$ is real,
\begin{equation*}
 P_{-w} (\overline{h}_w) 
 = P (e^{-i \langle w, F \rangle} \overline{h}_w) 
 = \overline{P (e^{i \langle w, F \rangle} h_w)} 
 = \overline{\lambda}_w \overline{h}_w,
\end{equation*}
so $\overline{\lambda}_w \in \Sp (P_{-w} \acts \Bcal)$. But $|\overline{\lambda}_w|>1-\delta$ 
and the only eigenvalue of $P_{-w}$ of modulus larger than $1-\delta$ is $\lambda_{-w}$. 
Hence, $\lambda_{-w} = \overline{\lambda}_w$ for all $w \in V$.

\smallskip

We recall that the billiard map is time-reversible. The involution  $\iota (\ell, \theta) = (\ell, -\theta)$ 
satifies:
\begin{align*}
 \iota \circ T 
 & = T^{-1} \circ \iota, \\
 F \circ \iota & = - F \circ T^{-1},
\end{align*}

\smallskip

Let $\Bcal^*$ be the dual of $\Bcal$. Informally, the space $\Bcal^*$ contains distributions which 
are regular in the direction of the stable cones of $T$, 
and irregular in the unstable cones. Then $\Sp (P_w^* \acts \Bcal^*) = \Sp (P_w \acts \Bcal)$.
In addition, for all $\varphi$, $\psi \in \Ccal^1 (M, \C)$:
\begin{equation}
 \label{eq:ExpressionPwDual}
 \int_M P_w^* (\varphi) \cdot \psi \dd \mu 
 = \int_M \varphi \cdot P_w (\psi) \dd \mu 
 = \int_M e^{i \langle w, F \rangle} \varphi \circ T \cdot  \psi \dd \mu.
\end{equation}

\smallskip

Let $\widetilde{\Bcal}$ be the image of $\Bcal$ under precomposition by the involution $\iota$. 
Again, informally, $\widetilde{\Bcal}$ contains distributions which 
are regular in the direction of the stable cones of $T$, and irregular in the unstable cones. 
Let us define $\widetilde{P}_w (\varphi) := (P_w (\varphi \circ \iota)) \circ \iota$, and extend this operator 
by continuity to $\widetilde{\Bcal}$. Then $\Sp (\widetilde{P}_w \acts \widetilde{\Bcal}) = \Sp (P_w \acts \Bcal)$. 
In addition, for all $\varphi$, $\psi \in \Ccal^1 (M, \C)$:
\begin{align}
 \int_M \widetilde{P}_w (\varphi) \cdot \psi \dd \mu 
 & = \int_M P_w (\varphi \circ \iota) \cdot \psi \circ \iota \dd \mu \nonumber \\
 & = \int_M e^{i \langle w, F \rangle} \varphi \circ \iota \cdot \psi \circ \iota \circ T \dd \mu \nonumber \\
 & = \int_M e^{i \langle w, - F \circ T^{-1} \circ \iota \rangle} \varphi \circ \iota \cdot \psi \circ T^{-1} \circ \iota \dd \mu \nonumber \\
 & = \int_M e^{-i \langle w, F \circ T^{-1} \rangle} \varphi \cdot \psi \circ T^{-1} \dd \mu \nonumber \\
 & = \int_M e^{-i \langle w, F \rangle} \varphi \circ T \cdot \psi \dd \mu. \label{eq:ExpressionPwTilde}
\end{align}

To sum up:
\begin{align*}
 \Sp (P_{-w}^* \acts \Bcal^*) & = \Sp (P_{-w} \acts \Bcal), \\
 \Sp (\widetilde{P}_w \acts \widetilde{\Bcal}) & = \Sp (P_w \acts \Bcal),
\end{align*}
and the operators $P_{-w}^*$ and $\widetilde{P}_w$ coincide on $\Ccal^1$ functions 
by Equations~\eqref{eq:ExpressionPwDual} and~\eqref{eq:ExpressionPwTilde}. 

\smallskip

We would like to show that the spectra of $P_{-w}^*$ and $\widetilde{P}_w$ coincide, at least outside 
of $\overline{B} (0,\rho_0)$. Unfortunately, a result such as~\cite[Lemma~A.1]{BaladiTsujii:2008} is 
not directly applicable, because we don't know whether $\Ccal^1 (M, \C)$ is dense in $\Bcal^*$. We 
use instead an ad hoc argument, and show the weaker result $\lambda_w = \lambda_{-w}$.

\smallskip

The space $\widetilde{\Bcal}$ is defined as the completion of $\Ccal^1 (M,\C)$ for the norm 
$\norm{\cdot}{\widetilde{\Bcal}}$. Hence, $\Ccal^1 (M,\C)$ is dense for the strong topology 
on $\widetilde{\Bcal}$. In addition, by~\cite[Lemma~3.9]{DemersZhang:2012}, $\Ccal^1 (M,\C)$ 
maps continuously into $\Bcal^*$, and this map is injective (since there is an injective 
embedding $\Ccal^\gamma \hookrightarrow \Bcal$).

\smallskip

Let $\Pi_{-w}$ be the (rank $1$) spectral projection of $P_{-w}$ onto the eigenspace 
corresponding to the eigenvalue $\lambda_\varepsilon$, and $Q_{-w} := P_{-w}-\lambda_w \Pi_{-w}$. 
The spectral radius of $Q_{-w}$ is no larger than $1-\delta$, so let $\delta'<\delta$ with $|\lambda_{-w}| >1-\delta'$. 
Then, for all $\varphi$, $\psi \in \Ccal^1 (M,\C)$,
\begin{equation*}
 \int_M \varphi \cdot P_{-w}^n (\psi) \dd \mu 
 = \lambda_{-w}^n \Pi_{-w} (\psi) (\varphi) +  O\left((1-\delta')^n \norm{\varphi}{\Ccal^1} \norm{\psi}{\Ccal^1}\right).
\end{equation*}
By density, there exists a $\Ccal^1$ function $\psi$ 
such that $\Pi_{-w} (\psi) \neq 0$ in $\Bcal$. Following the construction in~\cite[Lemma~3.8]{DemersZhang:2014}, 
and noticing that the test function can be chosen smooth (for instance by mollification), 
we get a function $\varphi \in \Ccal^1$ such that $\Pi_{-w} (\psi) (\varphi) \neq 0$.

\smallskip

In addition, for the functions $\varphi$ and $\psi$ constructed above,
\begin{align*}
 \int_M \varphi \cdot P_{-w}^n (\psi) \dd \mu
 & = \int_M (P_{-w}^*)^n (\varphi) \cdot \psi \dd \mu \\
 & = \int_M \widetilde{P}_w^n (\varphi) \cdot \psi \dd \mu \\
 & = \lambda_w^n \widetilde{\Pi}_w (\psi) (\varphi) + O\left((1-\delta')^n \norm{\varphi}{\Ccal^1} \norm{\psi}{\Ccal^1}\right),
\end{align*}
where $\widetilde{\Pi}_w$ is the eigenprojection of $\widetilde{P}_w$ corresponding to the eigenvalue 
$\lambda_w$. Hence, $\Pi_{-w} (\psi) (\varphi) = \widetilde{\Pi}_w (\varphi) (\psi) \neq 0$ and $\lambda_w = \lambda_{-w}$.

\smallskip

The function $w \mapsto \lambda_w$ is even on a neighborhood of zero, 
and since $\lambda_w = \lambda_{-w} = \overline{\lambda}_w$, it is also real-valued.
As a consequence, for the same value $\delta >0$, for all $\Lambda$:
\begin{equation*}
 \Sp (P_\Lambda \acts \Bcal_\Lambda) 
 \subset \overline{B}(0,1-\delta) \cup [1-\delta, 1].
\end{equation*}
This finishes the proof of Theorem~\ref{thm:ExistenceResonances}.

\begin{remark}[Real eigendistributions]
 
 The eigenvalues $\lambda_w \in [1-\delta, 1]$ are real, and the corresponding eigenspaces are $2$-dimensional. 
 These eigenspaces are spanned by pairs of eigendistributions $\{h_{\Lambda,\chi}^\Re, h_{\Lambda,\chi}^\Im\}$, 
 which are real (in that $h_{\Lambda,w}^\Re (\varphi)$ and $h_{\Lambda,w}^\Im (\varphi)$ are both real 
 for any real test function $\varphi$) and can be chosen as:
 \begin{equation*}
 \begin{array}{rcl}
  h_{\Lambda,\chi}^\Re & := &  \Re (h_w \otimes \chi) \\
  h_{\Lambda,\chi}^\Im & := &  \Im (h_w \otimes \chi),
 \end{array} 
 \end{equation*}
 where $\chi = e^{i \langle w, \cdot \rangle}$ and $h_w$ is an eigendistribution 
 for $P_w$. 
\end{remark}

\subsection{Distribution of the resonances}
\label{subsec:DistributionResonances}

Now, we shall discuss the convergence of the spectral densities and 
prove Propositions~\ref{prop:CompactenessSpectralMeasures}, \ref{prop:DistributionResonancesDim1} 
and~\ref{prop:DistributionResonancesDim2}.

\begin{proof}[Proof of Proposition~\ref{prop:CompactenessSpectralMeasures}]

Let $\rho_0$ be the upper bound on $\rho_{\ess} (P \acts \Bcal)$ given by~\cite[Proposition~2.3]{DemersZhang:2012}. 
Since $w \mapsto (P_w)_{w \in 2 \pi \Tbb^2}$ 
is a continuous family of transfer operators with $\rho_{\ess} (P_w \acts \Bcal) \leq \rho_0$, 
by continuity of the spectrum, for any compact $K \subset \overline{B}(0,\rho_0)^c$, the 
function $w \mapsto \nu_w (K)$ is bounded, where:
\begin{equation*}
 \nu_w 
 = \sum_{\substack{\lambda \text{ resonance of } P_w \\ |\lambda| > 1-\rho_0}} \delta_\lambda,
\end{equation*}
and where the resonances are counted with multiplicity.

\smallskip

The spectral decomposition yields, for any rank $2$ lattice $\Lambda$,
\begin{equation*}
 \nu_\Lambda 
 = \frac{1}{|G|} \sum_{w \in \widehat{G}} \nu_w,
\end{equation*}
whence $\Lambda \mapsto \nu_\Lambda (K)$ is also bounded uniformly in $\Lambda$. 
Compactness of subprobability measures on $K$ yields the existence of limit distributions 
of $(\nu_{\Lambda_N})$ on any compact $K$, and a diagonal argument yields 
Proposition~\ref{prop:CompactenessSpectralMeasures}.
\end{proof}

Propositions~\ref{prop:DistributionResonancesDim1} and~\ref{prop:DistributionResonancesDim2} 
follow from the discussion after~\cite[Theorem~3.1]{JakobsonNaudSoares:2019}, with 
some care to make the constants explicit.

\begin{proof}[Proof of Proposition~\ref{prop:DistributionResonancesDim1}]

Recall that $\Lambda_N^{(1)} = N \Z \otimes \Z$. By the aforementioned discussion, there exists 
$\delta_0 >0$ such that, for all $f \in \Ccal (\R, \C)$ supported on $[1-\delta_0,1]$,
\begin{equation*}
 \lim_{N \to + \infty} \int_{1-\delta_0}^1 f \dd \nu_N^{(1)} 
 = \frac{1}{2\pi} \int_\R f(\lambda_{t e_1}) \dd t,
\end{equation*}
with $e_1 = (1,0)$. The constant $2\pi$ comes from the different parametrization we use for $t$. 
By the Morse lemma, there exists a $\Ccal^1$ diffeomorphism of the real line $\Psi$ such that 
$\Psi' (0) = 1$ and:
\begin{equation*}
 \frac{1}{2\pi} \int_\R f(\lambda_{t e_1}) \dd t 
 = \frac{1}{2\pi} \int_\R f \left(1-\frac{\Sigma_{11} t^2}{2}\right) |\Psi' (t)| \dd t.
\end{equation*}
Let $\varepsilon >0$. If $f$ is supported on a small enough neighborhood of $1$, 
\begin{equation*}
 \left| \frac{1}{2\pi} \int_\R f(\lambda_{t e_1}) \dd t - \frac{1}{2\pi} \int_\R f \left(1-\frac{\Sigma_{11} t^2}{2}\right) \dd t \right| 
 \leq \varepsilon \norm{f}{\infty},
\end{equation*}
and:
\begin{equation*}
 \frac{1}{2\pi} \int_\R f \left(1-\frac{\Sigma_{11} t^2}{2}\right) \dd t 
 = \frac{1}{\pi} \int_0^{+ \infty} f \left(1-\frac{\Sigma_{11} t^2}{2}\right) \dd t 
 = \int_0^{+ \infty} f (1-u) \frac{1}{\pi\sqrt{2 \Sigma_{11} u}} \dd u,
\end{equation*}
finishing the proof of Proposition~\ref{prop:DistributionResonancesDim1}. 
\end{proof}

Proposition~\ref{prop:DistributionResonancesDim2} follows from the same kind of computations.

\end{document}